\def\0{\emptyset}
\def\0{\emptyset}
\newtheorem{theorem}{Theorem}[section]
\newtheorem{definition}[theorem]{Definition}
\newtheorem{lemma}[theorem]{Lemma}
\newtheorem{prop}[theorem]{Proposition}
\newenvironment{proof}{\noindent {\bf Proof.}}{\rule{2mm}{2mm}\par\medskip}
\newcommand\blue[1] {{\bf \color{blue} #1}}
\newcommand\equ[2]
\newcommand\eqn[2]
\newcounter{countclaim}
\def\inclaim{\addtocounter{countclaim}{1}
	{\vspace{0.2 cm}
		\noindent {\bf Claim \thecountclaim}: }}
\newcounter{countcase}
\def\incase{\addtocounter{countcase}{1}
	{\vspace{0.2 cm}
		\noindent {\bf Case \thecountcase}: }}
\newcounter{countsubcase}
\begin{document}
\title{A neighborhood union condition for the existence of a spanning tree without samll degree vertices\thanks{This research was partially supported by NSFC (No. 12371345 and 12371340) and CSC scholarship program (No. 202408420212).}}

\author[1,2]{\small Yibo Li\thanks{Email: liyibo@stu.hubu.edu.cn}}

\author[3]{\small Fengming Dong\thanks{Corresponding author. Email: fengming.dong@nie.edu.sg and donggraph@163.com}}

\author[1,2]{\small Huiqing Liu\thanks{ Email: hqliu@hubu.edu.cn}}

\affil[1]{\footnotesize Hubei Key Laboratory of Applied Mathematics, Faculty of Mathematics and Statistics, 
	
	Hubei University, Wuhan 430062, China}

\affil[2]{\footnotesize Key Laboratory of Intelligent Sensing System and Security (Hubei University), Ministry of Education}

\affil[3]{\footnotesize National Institute of Education, Nanyang Technological University, Singapore}

\date{}
\maketitle

\begin{abstract}
		For an integer $k\ge2$, a $[2,k]$-ST of a connected graph $G$ is a spanning tree of $G$ in which there are no vertices of degree between $2$ and $k$. A $[2,k]$-ST is a natural extension of a homeomorphically irreducible spanning tree (HIST), which is a spanning tree without vertices of degree $2$. In this paper, we give a neighborhood union condition for the existence of a $[2,k]$-ST in $G$. 
		We generalize a known degree sum condition that guarantees the existence of a $[2,k]$-ST in $G$.
\end{abstract}

\noindent {\bf Keywords}: ~$[2,k]$-ST;  spanning tree; neighborhood union condition


\section{Introduction}
All graphs considered in this paper are simple and finite. For any graph $G$, let $V(G)$ and $E(G)$ denote its vertex set and edge set, respectively. For $v\in V(G)$, $N_G(v)$ is the {\em set of neighbors} of $v$, and $d_G(v)=|N_G(v)|$ is the {\em degree} of $v$ in $G$. Denote $N_G[v]=N_G(v)\cup\{v\}$. We may simplify write $d(v)$
and  $N[v]$ 
for $d_G(v)$ and $N_G[v]$,
respectively, 
if there is no risk of confusion.
Let $|G|$ and $\delta(G)$ be the {\em number of vertices} and the {\em minimum degree} of $G$, respectively. For an integer $i\ge0$, let $V_i(G)=\{u\in V(G):d_G(u)=i\}$. If $G$ is not a complete graph, then we define
\begin{eqnarray*}
	\sigma(G)&:=&\mbox{min}\{d_G(u)+d_G(v):u,v\in V(G),
	\ u\neq v, uv\notin E(G)\},\\
	NC(G)&:=&\mbox{min}\{|N_G(u)\cup N_G(v)|:
	u,v\in V(G),\ u\neq v,
	uv\notin E(G)\}.
\end{eqnarray*}

For any non-empty subset $S$ of $V(G)$, let $G[S]$ denote the subgraph of $G$ induced by $S$, let $G-S$ denote $G[V(G)\setminus S]$
when $S\ne V(G)$, and write $N_S(v)$ and $d_S(v)$ for $N_G(v)\cap S$ and $|N_G(v)\cap S|$, respectively, for each  $v\in V(G)$. If $S=\{v\}$, then we simplify $G-\{v\}$ to $G-v$. For any proper subgraph $H$ of $G$ and $S\subseteq V(G)\setminus V(H)$, 
let $H+S$ 
denote $G[V(H)\cup S]$
and simplify $H+\{v\}$ to
$H+v$  if $S=\{v\}$.

  Given two disjoint vertex sets $X,Y\subseteq V(G)$, 
  let $E_G(X,Y)$
  (or simply $E(X,Y)$) 
  denote the set of 
  edges $xy$ with $x\in X$ and $y\in Y$. 
  When $X=\{x\}$, we write $E(x,Y)$ for $E(X,Y)$. 
  For a subgraph $H$ of $G$, we consider it as both a subgraph and
a vertex set of $G$. Denote by $K_n$ the complete graph of order $n$.

The study of spanning trees under specific degree constraints is a central topic in graph theory, particularly in the study of Hamiltonian paths and their generalizations. For a graph $G$, a spanning tree of $G$ without vertices of degree $2$ is called a {\em homeomorphically irreducible spanning tree} (HIST), that is, a spanning tree $T$ of $G$ is a HIST if and only if $V_2(T) = \emptyset$. A HIST can be viewed as a natural counterpart to a Hamiltonian path, which is a spanning tree where every vertex, except for the endvertices, has degree exactly 2. As Hamiltonian path research progresses, HIST research has been attracting attention (see \cite{ABHT90, ChRS12, Chsh13, FuTs20, ItTs22, LiDo24,FuTs13} for example). 

Various sufficient conditions for the existence of a HIST have been established. For instance, Albertson, Berman, Hutchinson and Thomassen \cite{ABHT90} gave a condition on $\delta(G)$ for the existence of a HIST.

\begin{theorem} [\cite{ABHT90}]\label{Thm1.1}
	Let $G$ be a connected graph of order $n$. If $\delta(G)\geq4\sqrt{2n}$, then $G$ has a HIST.
\end{theorem}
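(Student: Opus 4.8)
The plan is to construct the HIST greedily: grow a subtree $T$ of $G$ that never acquires a vertex of degree $2$, and use the hypothesis $\delta(G)\ge 4\sqrt{2n}$ to show that the growth cannot stop before $T$ spans $G$.

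\noindent\textbf{The greedy growth.} Since $\delta(G)\ge 4\sqrt{2n}\ge 3$, start $T$ as a claw $K_{1,3}$ on any vertex and three of its neighbours, the three leaves marked \emph{active}. Then repeat while an active leaf remains: choose an active leaf $x$, set $U_x:=N_G(x)\setminus V(T)$, and if $|U_x|\ge 2$ attach \emph{all} of $U_x$ to $x$ and mark the new vertices active (so $x$ becomes internal of degree $1+|U_x|\ge 3$ and $T$ stays free of degree-$2$ vertices), while if $|U_x|\le 1$ \emph{retire} $x$ (it remains a leaf forever). The process terminates, and at the end $T$ has no vertex of degree $2$ while every leaf of $T$ has been retired; if $V(T)=V(G)$, then $T$ is a HIST and we are done.

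\noindent\textbf{The stuck configuration.} Suppose instead $R:=V(G)\setminus V(T)\ne\emptyset$. By construction every internal vertex of $T$ has all of its $G$-neighbours in $V(T)$, and every retired leaf $p$ has at most one $G$-neighbour outside $V(T)$ at the moment it is retired, hence — since $V(T)$ only grows — at most one in $R$. Therefore every $r\in R$ sends all of its edges to $V(T)$ into the set $P$ of retired leaves, and if such a $p$ is adjacent to $r$ then $r$ is the \emph{unique} exterior neighbour of $p$; consequently the sets $N_G(r)\cap V(T)$, $r\in R$, are pairwise disjoint subsets of $P$. Since $d_G(r)\ge\delta$ and $r$ has at most $|R|-1$ neighbours in $R$, summing over $r$ gives $|P|\ge |R|(\delta-|R|+1)$ (equivalently $\sum_{r\in R}d_{G[R]}(r)\ge\delta|R|-|P|$, while the left side is at most $|R|(|R|-1)$). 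Alongside this one has for free the size bounds $|P|\le |V(T)|=n-|R|$, the tree inequality $|V(T)|\le 2|P|-2$ (a tree with no degree-$2$ vertex has at least two more leaves than internal vertices, and here all leaves of $T$ lie in $P$), and $\sum_{p\in P}d_{V(T)}(p)\ge(\delta-1)|P|$ because each retired leaf has at most one neighbour outside $V(T)$.

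\noindent\textbf{Finishing, and the main obstacle.} It remains to turn these inequalities into a contradiction with $\delta\ge 4\sqrt{2n}$ by optimising over $|R|$. This last step, not the construction, is where the real difficulty lies: the first-order estimates above only force $\delta=O(n)$, since when $|R|$ is small the binding constraint degrades to $|P|\gtrsim n/2$, which a large $\delta$ does not violate. Recovering the $\sqrt n$-threshold requires a genuinely two-dimensional input, and I see two natural routes. The first is to recurse: after reducing to the case $G[R]$ connected and bounding how many $V(T)$-neighbours a vertex of $R$ may have — so that $\delta(G[R])$ is still of order $\delta$ while $|R|\le n-|V(T)|$ — apply the construction again inside $G[R]$, so that an uncovered part never disappears and the iterated estimates push $\delta$ past the total number of vertices. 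The second is to run the greedy growth in breadth-first layers off the initial claw and charge each retirement to the $\ge\delta-1$ previously added vertices it is forced to see, making both $(\text{number of retirements})\cdot\delta$ and $|R|\cdot\delta$ of order $O(n)$, which is exactly what defeats $4\sqrt{2n}$. Making one of these balancing arguments rigorous is the crux; I would also expect the constant $4\sqrt{2}$ to be an artifact of the bookkeeping rather than essential.
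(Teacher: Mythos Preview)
The paper does not prove Theorem~\ref{Thm1.1}; it merely quotes it from \cite{ABHT90} as background, so there is no ``paper's own proof'' to compare against. Your proposal must therefore stand or fall on its own, and as written it does not stand: you yourself identify the crux --- turning the inequality $|P|\ge |R|(\delta-|R|+1)$ together with $|P|\le n-|R|$ into a contradiction with $\delta\ge 4\sqrt{2n}$ --- and then leave it open. When $|R|$ is small (in the extreme, $|R|=1$) these inequalities are vacuous, and neither of your two sketched ``routes'' closes the gap. Route~1 assumes $\delta(G[R])$ remains of order $\delta$, but nothing you have proved prevents a vertex of $R$ from having almost all of its neighbours among the retired leaves (indeed when $|R|=1$ it must), so the recursion has no traction. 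Route~2's charging scheme is too vague to assess; the obvious reading --- summing $\ge\delta-1$ interior neighbours over all retired leaves --- only gives $(\delta-1)|P|\le 2|E(G[V(T)])|$, which is far from forcing $|P|\cdot\delta=O(n)$.

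There is also a small slip in the setup: starting from a claw $K_{1,3}$ rather than the full star on the initial vertex means the centre may retain $G$-neighbours outside $V(T)$, so the assertion that ``every internal vertex of $T$ has all of its $G$-neighbours in $V(T)$'' fails for that one vertex; this is easily repaired by attaching all neighbours of the initial vertex at the start. The substantive issue, however, is the unfinished endgame: the small-$|R|$ case genuinely requires a modification of the tree $T$ (not just counting), and until you supply that argument the proof is incomplete.
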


Later, Ito and Tsuchiya \cite{ItTs22} found a sufficient condition based on the minimum degree sum $\sigma(G)$.

\begin{theorem}[\cite{ItTs22}]	\label{Thm1.2}
	Let $G$ be a graph of order $n\geq8$.
	If 
	$\sigma(G)\geq n-1$, 
	then $G$ has a HIST.
\end{theorem}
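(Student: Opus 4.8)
The plan is to run an extremal spanning-tree argument in which the degree-sum hypothesis supplies the local exchanges. (Note that $\sigma(G)\ge n-1$ is precisely Ore's condition for a Hamilton path, but a Hamilton path has every internal vertex of degree $2$, so it is useless here and the proof must be of a different flavour.) First, $\sigma(G)\ge n-1$ forces $G$ to be connected, since two vertices in distinct components of orders $a$ and $n-a$ would form a non-adjacent pair of degree sum at most $n-2$. The same bound pins down the coarse structure of $G$: any two vertices of degree at most $(n-2)/2$ must be adjacent, so the small-degree vertices form a clique and all other vertices have degree exceeding $(n-2)/2$; more useful still, every non-neighbour of a vertex $u$ has degree at least $n-1-d_G(u)$, so a small-degree vertex is surrounded by near-dominating vertices. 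I would fix a spanning tree $T$ of $G$ minimizing $|V_2(T)|$ and, subject to that, optimal for a secondary parameter measuring how ``star-like'' $T$ is (for instance maximizing $\sum_{u}d_T(u)^2$), and aim to prove $V_2(T)=\emptyset$. The case $\delta(G)\le 2$ is best treated separately: a vertex $u$ of minimum degree must be a leaf in every HIST, one of its neighbours has large degree and can be taken internal, and the remainder of $G$ is dense enough (by the structure above) that a HIST with that vertex internal can be built directly.

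So assume $\delta(G)\ge 3$, suppose $V_2(T)\ne\emptyset$, and pick $v\in V_2(T)$ with $N_T(v)=\{x,y\}$; deleting $v$ splits $T$ into subtrees $T_x\ni x$ and $T_y\ni y$. Since $d_G(v)\ge 3$, $v$ has a neighbour $w\notin\{x,y\}$, say in $V(T_x)$; adding $vw$ to $T$ creates a unique cycle $C$ through $v$ and $x$, and deleting any edge $e$ of $C$ not incident with $v$ yields a spanning tree $T'$ with $d_{T'}(v)=3$, so $v$ leaves $V_2$. A direct count shows $|V_2(T')|\le|V_2(T)|$ unless both ends of $e$ have $T$-degree at least $3$ with one of them exactly $3$; hence all choices of $e$ can fail only when $C$ carries no degree-$2$ vertex besides $v$ and every edge of $C$ off $v$ joins two vertices of degree $\ge 3$, at least one of degree exactly $3$. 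Against such a ``ladder'' I would invoke the secondary extremal choice: performing the swap anyway, with a suitable $e$, leaves $|V_2|$ unchanged while strictly raising the ``star-likeness'' parameter, contradicting the choice of $T$; and if $v$ has many $G$-neighbours there are simply enough chords $vw$ to route around the ladder.

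The crux of the proof, and where $\sigma(G)\ge n-1$ is genuinely needed and not just connectivity, is twofold: the ``ladder'' configuration just described (which forces $v$ to have few $G$-neighbours), and, in the $\delta(G)\le 2$ analysis, the need to attach a degree-$2$ vertex of $G$ as a leaf under a vertex that must stay internal. Both are handled by the same device: a vertex $z$ non-adjacent to a small-degree vertex $v$ satisfies $d_G(z)\ge n-1-d_G(v)$, so $z$ is adjacent to almost every vertex and in particular has neighbours in both $T_x$ and $T_y$; rerouting a bounded number of edges at this near-dominating $z$ lets one reconnect $T_x$ and $T_y$ through $z$ while lifting $v$ to degree $3$ and keeping every other vertex off degree $2$. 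Turning this into a proof is a finite but delicate case analysis on the degree pattern near $v$, and it is precisely here that the hypothesis $n\ge 8$ is spent: small graphs such as $C_4$, $C_5$ and $K_{2,3}$ satisfy $\sigma\ge n-1$ yet have no HIST, so the argument must keep enough slack to steer clear of them.
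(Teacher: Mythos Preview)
The paper does not contain a proof of Theorem~\ref{Thm1.2}: this is a cited result of Ito and Tsuchiya~\cite{ItTs22}, stated here only as background, so there is no ``paper's own proof'' to compare your proposal against.

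As for the proposal itself, it is a reasonable outline but not yet a proof, and the gaps are exactly where you flag them. The extremal spanning-tree framework with a secondary ``star-likeness'' parameter is a natural starting point, and your local swap (add $vw$, delete an edge of the resulting cycle) is the obvious move. However, the resolution of the ``ladder'' obstruction is not actually carried out. You assert that in the bad case the swap keeps $|V_2|$ fixed while strictly raising $\sum_u d_T(u)^2$, but this needs checking: if the deleted edge $e=ab$ has $d_T(a)=d_T(b)=3$, then the swap sends $v$ from degree~$2$ to~$3$, $w$ up by one, and $a,b$ each down by one, so the net change in $\sum d_T(u)^2$ depends on $d_T(w)$ and can be zero or negative. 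Likewise, the step ``rerouting a bounded number of edges at this near-dominating $z$'' is the heart of the matter and is left entirely unspecified: you need an explicit construction that reconnects $T_x$ and $T_y$ through $z$, raises $d_T(v)$ off~$2$, and avoids creating new degree-$2$ vertices, and it is not clear a single high-degree $z$ suffices for this without further structure.

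In short: the strategy is in the right spirit, but the two places you identify as ``the crux'' are precisely where the argument is missing rather than merely delicate.
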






In \cite{LiDo24},
the authors of this article
	together with another author
found that  $NC(G)\ge\frac{n-1}{2}$ 
is a weaker sufficient condition 
for the existence of a HIST.

\begin{theorem}[\cite{LiDo24}] \label{Thm1.3} 
	Let $G$ be a connected graph of order $n\geq270$. If 
	\begin{equation*} \label{Thm1.1.3-e1} 
		NC(G)\geq\frac{n-1}{2},
	\end{equation*} 
	then $G$ has a HIST
	if and only if $G$ does not belong to the four exceptional families of graphs.  
\end{theorem}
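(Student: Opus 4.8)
\medskip
\noindent The plan is to treat the two implications separately. The ``only if'' part is the routine direction: one shows that each graph in the four exceptional families fails to have a HIST by a counting argument on its spanning trees. Two facts drive this. First, in any spanning tree $T$ with $V_2(T)=\emptyset$, the set $B$ of branch vertices (those of degree at least $3$) induces a \emph{connected} subtree of $T$, because every internal vertex of the $T$-path between two branch vertices has tree-degree at least $2$, hence at least $3$. Second, $3|B|\le\sum_{v\in B}d_T(v)=2(n-1)-|V(T)\setminus B|=n-2+|B|$, so $|B|\le (n-2)/2$ and $T$ has more than $n/2$ leaves. Feeding the explicit structure of each exceptional family (a small ``core'' joined to many vertices that cannot supply enough branch structure) into these inequalities produces the contradiction.

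For the ``if'' part, assume $G$ is connected, $|G|=n\ge 270$, $NC(G)\ge (n-1)/2$, and $G$ has no HIST; the goal is to force $G$ into one of the four families. I would use the reformulation suggested above: $G$ has a HIST if and only if there is a set $B\subseteq V(G)$ with $G[B]$ connected, with every vertex of $V(G)\setminus B$ having a neighbour in $B$, and with a spanning tree $T_B$ of $G[B]$ whose leaves can be assigned private pendant vertices from $V(G)\setminus B$ so that every vertex of $B$ ends with degree at least $3$ (a Hall-type condition between $V(G)\setminus B$ and the leaves of $T_B$). To produce such a $B$, let $T$ be a subtree of $G$ with $V_2(T)=\emptyset$ and $|V(T)|$ as large as possible, and set $R=V(G)\setminus V(T)$; if $R=\emptyset$ we are done, so suppose $R\neq\emptyset$. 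First I would dispose of the very dense regime --- when the independence number $\alpha(G)$ is $2$, or more generally when $G$ has a small connected dominating set of high-degree vertices --- where a $B$ of size $2$ or $3$ can be written down directly from the $NC$ bound. In the remaining regime the $NC$ bound, applied to non-adjacent pairs, forces all but a bounded number of vertices to have degree at least roughly $(n-1)/2$, and in particular $|R|$ to be small relative to $n$.

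The heart of the matter is then an extension/rigidity dichotomy. A single vertex of $R$ can never be appended to a leaf of $T$ without creating a degree-$2$ vertex, so any enlargement must be carried out in ``gadgets'': attaching a pendant claw $K_{1,3}$ at an interior vertex of $T$, attaching two or more $R$-vertices to a common leaf, or rerouting a tree edge incident to a leaf and re-attaching. Applying $NC(G)\ge(n-1)/2$ to a non-adjacent pair formed by a vertex of $R$ and a leaf of $T$ (or by two vertices of $R$) shows these vertices have many neighbours inside $V(T)$ --- the potential anchor points --- and a quantitative comparison (this is where $n\ge 270$ is consumed, the bound emerging from the resulting quadratic inequality) shows that some gadget is always available unless the adjacency between $V(T)$ and $R$, and between the leaves and the interior of $T$, is extremely constrained. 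Either way the maximality of $T$ is contradicted or $V(T)=V(G)$, \emph{unless} $G$ realizes one of those rigid patterns, at which point the $NC$ bound forces the rest of $G$ to be dense enough to identify it as a member of one of the four exceptional families. The main obstacle is exactly this step: since no outside vertex is ever ``free'', one must always locate a full gadget's worth of room, and enumerating the finitely many ways in which every candidate gadget can simultaneously fail is what generates the exceptional families; keeping the numerical slack alive down to $n=270$ is the accompanying technical nuisance.
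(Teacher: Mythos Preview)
The paper does not prove Theorem~\ref{Thm1.3}; it is quoted verbatim from \cite{LiDo24} as background and motivation, and no proof (or even sketch) of it appears anywhere in this manuscript. So there is nothing here to compare your proposal against.

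That said, a brief comment on your write-up as a standalone attempt: what you have is a strategy memo, not a proof. The ``only if'' direction is unworkable as written because you never specify what the four exceptional families are, so the counting bounds $|B|\le (n-2)/2$ and ``more than $n/2$ leaves'' have nothing to bite on. For the ``if'' direction, the maximal-subtree-plus-gadget framework is a reasonable organizing principle, but every decisive step is deferred to phrases like ``a quantitative comparison shows'' and ``enumerating the finitely many ways \ldots generates the exceptional families''. In a result of this type the entire content lives in that enumeration and in the explicit inequalities that pin down the threshold $n\ge 270$; asserting that they work out is not the same as carrying them out. If you want this to stand as a proof you would need to (i) state the four families explicitly, (ii) verify for each that no HIST exists, and (iii) make the extension argument effective, with the numerical bookkeeping visible.
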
 

For an intrger $k\ge2$, a spanning tree $T$ of $G$ is called a {\em $[2,k]$-ST} of $G$ if $V_i(T)=\emptyset$
for each $i$ with $2\le i\le k$.
Note that a $[2,2]$-ST is exactly a HIST. This concept, introduced by Furuya and Tsuchiya in \cite{FuTs13}, generalizes the definition of a HIST. Similar to the study of HIST, it is natural to consider the existence of a $[2,k]$-ST in terms of  degree conditions. Some related results on the existence of $[2,k]$-STs have been obtained (see~\cite{FuST24, FuTsarxiv}).  

For any integer $k\ge 2$, let $c_k
 =(\sqrt{k}+\sqrt{2})\sqrt{k(k-1)}$. 
 Furuya, Saito, and Tsuchiya~\cite{FuST24} established a minimum degree condition for the existence of a $[2,k]$-ST, which generalized Theorem~\ref{Thm1.1}.

\begin{theorem}
[\cite{FuST24}]
	\label{Thm1.4}
	Let $k\ge2$ be an integer, and let $G$ be a connected graph of order $n$. If
	\begin{equation} \label{Thm1.4-e1} 
		\delta(G)\ge c_k\sqrt{n},
	\end{equation}
	then $G$ has a $[2,k]$-ST.
\end{theorem}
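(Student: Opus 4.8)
The plan is to argue by contradiction, following the strategy behind Theorem~\ref{Thm1.1}. Suppose $G$ has no $[2,k]$-ST. Call a subtree $T$ of $G$ \emph{admissible} if $d_T(v)\ge k+1$ for every non-leaf $v$ of $T$; since any single edge spans an admissible subtree, there is an admissible subtree of maximum order. Fix one, $T$, and set $R=V(G)\setminus V(T)$, with $L$ and $I$ the leaves and internal vertices of $T$ and $t=|V(T)|$. Because $G$ has no $[2,k]$-ST we have $R\ne\emptyset$, and since $G$ is connected some edge joins $R$ to $V(T)$. Counting $2(t-1)=\sum_{v\in V(T)}d_T(v)\ge (k+1)|I|+|L|$ gives $|L|\ge\frac{(k-1)t+2}{k}$ and $|I|\le\frac{t-2}{k}$; in particular $|L|$ and $|I|$ are within constant factors of $t$.

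The next step is to read off structural constraints from maximality of $T$. If some vertex of $R$ were adjacent to a vertex of $I$, we could attach it there as a new leaf and enlarge $T$; hence every edge leaving $R$ ends in $R\cup L$. If some leaf of $T$ had $k$ or more neighbours in $R$, we could attach $k$ of them to that leaf, making it internal of degree $k+1$, and enlarge $T$; hence $e_G(R,L)\le (k-1)|L|$, and any leaf $v$ adjacent to $R$ has at least $\delta(G)-(k-1)$ neighbours in $V(T)$, so $t\ge\delta(G)-k+2$. By pushing this further — re-routing some pendant edges of $T$ so that a leaf $v$ adjacent to $R$ can simultaneously absorb all its neighbours in $R$ and some of its leaf-neighbours in $T$, which would again enlarge $T$ — one shows that such a $v$ has almost all of its neighbours in $I$, forcing $|I|\gtrsim\delta(G)$ and hence a lower bound $t\gtrsim k\,\delta(G)$.

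The heart of the argument is an upper bound on $|R|$ of order $\sqrt n$. Fix a threshold $\tau$; since $e_G(R,L)\le (k-1)|L|<(k-1)t$, fewer than $\frac{(k-1)t}{\delta(G)-\tau}$ vertices of $R$ have $\ge\tau$ neighbours in $L$, so after deleting these the induced subgraph of $G[R]$ has minimum degree at least $\tau$ minus that small count. Inside it, run a greedy construction of an admissible subtree: repeatedly expand a leaf by $k$ new neighbours, and whenever a leaf is adjacent to $\ge k$ other leaves, restructure (re-route pendant edges) to make it internal. Since an admissible tree has at most $\frac{t-2}{k}$ internal vertices, this process yields an admissible subtree whose order is forced to be of order (minimum degree)$\,\times\,k$; if $|R|$ were too large, this subtree would be longer than $T$, contradicting maximality — here connectedness of $G$ is also used to rule out the degenerate case in which $G[R]$ splits into many small dense blocks. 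Combining the bound on $|R|$ with the degree count $\delta(G)|R|\le 2e(G[R])+e_G(R,L)<|R|^2+(k-1)t$, i.e.\ $\delta(G)<|R|+\frac{(k-1)(n-|R|)}{|R|}$, with the lower bound on $t$ from the previous paragraph, and optimising the choice of $\tau$ (equivalently the split between the ``$R$ large'' and ``$R$ small'' regimes), the two error terms balance and produce $\delta(G)<(\sqrt k+\sqrt 2)\sqrt{k(k-1)}\,\sqrt n=c_k\sqrt n$, contradicting~\eqref{Thm1.4-e1}.

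I expect the main obstacle to be precisely the bound on $|R|$. A minimum-degree hypothesis on $G[R]$ by itself only produces an admissible subtree of order about $\delta(G[R])$ — a disjoint union of cliques shows this cannot be improved — so one must both iterate a degree-boosting restructuring of the partial tree and genuinely exploit the global connectivity of $G$, and then track the constants carefully through the restructuring so that the final optimisation lands exactly on $c_k=(\sqrt k+\sqrt 2)\sqrt{k(k-1)}$ rather than on a weaker constant. Verifying that the restructuring steps never destroy the ``degree $\ge k+1$'' property at vertices other than the one being promoted (the parents of re-routed leaves) is the routine-but-delicate point that the formal write-up must handle.
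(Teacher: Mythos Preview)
The paper does not prove Theorem~\ref{Thm1.4}; it is quoted from \cite{FuST24} and used as a black box (in Lemma~\ref{le2.4}(ii), in Lemma~\ref{le2.8}, and in the opening reduction of Section~4). So there is no proof in this paper to compare your proposal against.

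As a standalone sketch your outline is in the right spirit --- maximal admissible subtree, leaf/internal counting $|I|\le(t-2)/k$, and the edge-count inequality $\delta|R|\le |R|^2+(k-1)t$ --- but what you have written is not yet a proof. The step you yourself flag, bounding $|R|$ so that the optimisation lands exactly on $c_k=(\sqrt{k}+\sqrt{2})\sqrt{k(k-1)}$, is the entire content of the theorem, and your description of it (``run a greedy construction \ldots\ restructure \ldots\ this process yields an admissible subtree whose order is forced to be of order (minimum degree)\,$\times\,k$'') is a hope rather than an argument: you have not specified the restructuring move, shown it terminates, or tracked the constants. The claim that a leaf $v$ adjacent to $R$ must have almost all its neighbours in $I$ also needs a concrete re-routing lemma that you have not stated. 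If you want to actually prove the theorem you should consult \cite{FuST24} directly; for the purposes of the present paper, nothing beyond the citation is required.
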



Let $n_0(k)$ be the smallest positive integer such that $n-4c_k\sqrt{n}-2k^2-4k-4\ge0$
holds for every integer $n\ge n_0(k)$. In \cite{FuTsarxiv}, Furuya and Tsuchiya gave a degree sum condition for the existence
of a $[2,k]$-ST.

\begin{theorem}[\cite{FuTsarxiv}]\label{Thm1.5}
	Let $k\ge2$ be an integer, and let $G$ be a connected graph of order $n\ge n_0(k)$. If
	$$
	\sigma(G)\ge n-2,
	$$ 
	then $G$ has a $[2,k]$-ST if and only if $G$ dose not belong to one exceptional family of graphs.
\end{theorem}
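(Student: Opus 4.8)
The plan is as follows. Since $\sigma(G)\ge n-2$ forces $NC(G)\ge\frac{n-2}{2}$ --- for non-adjacent $u,v$ one has $|N_G(u)\cup N_G(v)|\ge\max\{d_G(u),d_G(v)\}\ge\frac{d_G(u)+d_G(v)}{2}$ --- any sufficiently strong neighborhood-union version of the statement, such as the main theorem of this paper, already subsumes the present theorem; independently of that, here is a direct plan. I would run the standard minimum-degree dichotomy: if $\delta(G)\ge c_k\sqrt n$, then Theorem~\ref{Thm1.4} immediately produces a $[2,k]$-ST, so fix a vertex $v_0$ with $d_G(v_0)=\delta(G)<c_k\sqrt n$ and set $B=N_G[v_0]$ and $A=V(G)\setminus B$.

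Each vertex of $A$ is a non-neighbor of $v_0$, so by $\sigma(G)\ge n-2$ it has fewer than $c_k\sqrt n+1$ non-neighbors in $G$. Consequently every vertex of degree at most $k$ lies in $B$, $|B|=d_G(v_0)+1\le c_k\sqrt n$, and $\delta(G[A])>|A|-c_k\sqrt n-2$. Since $|A|=n-1-\delta(G)>n-1-c_k\sqrt n$, the hypothesis $n\ge n_0(k)$ --- that is, $n-4c_k\sqrt n-2k^2-4k-4\ge0$ --- makes $G[A]$ connected with $\delta(G[A])\ge c_k\sqrt{|A|}$, so Theorem~\ref{Thm1.4} yields a $[2,k]$-ST $T_A$ of $G[A]$. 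The resulting slack is the budget for what follows: its $c_k\sqrt n$-sized part pays for re-attaching all of $B$, and the constant part (the $2k^2+4k+4$ in $n_0(k)$) for handling the at most $k+1$ vertices that are forced to be leaves. In short, $G$ is essentially complete on the huge set $A$, and every difficulty is confined to the small set $B$.

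It remains to extend $T_A$ to a spanning $[2,k]$-ST of $G$ by absorbing $B$. Because $G$ is connected and $A\ne\emptyset$, every component of $G[B]$ has an edge to $A$; I would attach, along one such edge per component, a spanning tree of that component to $T_A$, and then correct degrees: any vertex that thereby acquires tree-degree at least $2$ is promoted to an internal vertex and given enough extra leaves drawn from $A$ to reach degree $\ge k+1$ --- cheap, since the relevant $A$-vertices have degree $>|A|-c_k\sqrt n-2$ and can spare them --- while every other vertex of $B$ is hung as a leaf on an internal vertex, re-routing a few leaves of $T_A$ wherever necessary so that the final tree has no vertex of degree in $\{2,\dots,k\}$. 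Since $|B|\le c_k\sqrt n$ and each $A$-vertex misses at most $c_k\sqrt n+1$ vertices, the count closes in every case except when some vertex of $B$ cannot be seated at all.

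That exceptional case is the core of the problem and the source of the one exceptional family. The unavoidable obstruction is a vertex $v$ with $d_G(v)\le k$ all of whose neighbors also have degree $\le k$: then $v$ must be a leaf of every $[2,k]$-ST, and hence so must its unique tree-neighbor, which is impossible once $n\ge3$; moreover, under $\sigma(G)\ge n-2$ the set $\{u:d_G(u)\le k\}$ is a clique of size at most $k+1$, so this is a small, explicitly describable configuration. The real work --- where I expect the bulk of the effort to lie --- is to show this is the \emph{only} obstruction: that when $G$ carries no such $v$, the incorporation of $B$ can always be completed, which demands a careful case analysis of how the few low-degree vertices of $B$ hook onto $A$, in what order they and their leaves are placed, and how $T_A$ is re-routed; and then to assemble all the bad graphs --- parametrized by $k$, $n$, the almost-complete part of $G$, and the gluing pattern of the low-degree vertices --- into a single family and verify that it is precisely the set of exceptions.
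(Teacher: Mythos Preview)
The paper does not prove Theorem~\ref{Thm1.5}; it is quoted from \cite{FuTsarxiv} as background. The only argument the paper offers is the Remark at the end of Section~1: $\sigma(G)\ge n-2$ implies $NC(G)\ge\frac{n-2}{2}$, and $n_0(k)\ge n_1(k)$, so Theorem~\ref{Thm1.6} ``generalizes'' Theorem~\ref{Thm1.5}. Your opening observation is exactly this reduction, so on that point you match the paper.

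However, this reduction --- both yours and the paper's --- does not actually recover the full statement of Theorem~\ref{Thm1.5}. Theorem~\ref{Thm1.6} carries the extra hypothesis $\delta(G)\ge 2k$ and has no exceptional family, whereas Theorem~\ref{Thm1.5} is an if-and-only-if with one exceptional family and no lower bound on $\delta$. So Theorem~\ref{Thm1.6} only yields the forward direction of Theorem~\ref{Thm1.5} under the additional assumption $\delta(G)\ge 2k$; the cases $\delta(G)<2k$ (which is exactly where the exceptional family lives) and the ``only if'' direction are not covered. You do not flag this gap when you say the main theorem ``subsumes'' the present one.

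Your independent direct plan is a sensible outline --- dichotomy on $\delta(G)$ versus $c_k\sqrt n$, build a $[2,k]$-ST on the dense part $A$ via Theorem~\ref{Thm1.4}, then absorb the small set $B$ --- and you correctly locate the real difficulty in the absorption step and the identification of the exceptional family. But as written it is a plan, not a proof: the re-routing and leaf-reassignment arguments are asserted to ``close'' without the case analysis being carried out, and the description of the obstruction (a vertex $v$ with $d_G(v)\le k$ all of whose neighbors have degree $\le k$) is one natural candidate but you have not verified it matches the family in \cite{FuTsarxiv}, nor shown that every non-exceptional graph admits a completed absorption. Since the paper itself supplies no proof to compare against, there is nothing further to benchmark your sketch on here.
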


\vspace{3mm}
For sufficiently large graphs, Theorem~\ref{Thm1.5} is a generalization of Theorem~\ref{Thm1.2}. In this paper, we consider the existence of a $[2,k]$-ST in a graph $G$ under the condition that $NC(G)\ge\frac{n-2}{2}$. Let $n_1(k)$ be the smallest positive integer such that $n-4c_k\sqrt{n}-12k+14\ge0$ holds for every integer $n\ge n_1(k)$. It is easy to 
verify that 
for $k=2,3,4,5$, we have
$n_1(k)=276, 994, 2306$
and $4356$, 
respectively.
It can also be proved that 
$n_1(k)>16k^3$ for all $k\ge 2$.


In this article, we establish the following conclusion.

\begin{theorem}\label{Thm1.6}
	Let $k\ge2$ be an integer, and let $G$ be a connected graph of order $n\ge n_1(k)$. If $\delta(G)\ge2k$ and		
	\begin{equation} \label{Thm1.6-e1} 
		NC(G)\geq\frac{n-2}{2},
	\end{equation} 
	then $G$ has a $[2,k]$-ST.
\end{theorem}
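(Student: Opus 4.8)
The plan is to build the $[2,k]$-ST greedily, starting from a carefully chosen ``core'' structure and then extending it vertex-by-vertex while controlling the set of vertices that are still allowed to become leaves or internal branch-vertices. The overall strategy mirrors the HIST argument of Theorem~\ref{Thm1.3} but must be adapted so that \emph{every non-leaf of the final tree has degree at least $k+1$} rather than merely at least $3$; the hypothesis $\delta(G)\ge 2k$ is exactly what makes this local degree budget feasible. First I would fix a maximal ``spider-like'' or ``double-star-like'' subtree $T_0$ in which each internal vertex already has degree $\ge k+1$ inside $T_0$, using the minimum-degree bound to guarantee enough pendant room at each branch vertex; if $NC(G)$ is large this can be done so that $V(T_0)$ dominates a large fraction of $G$. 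The key bookkeeping device is a partition of the uncovered vertices into those adjacent to $T_0$ (``attachable'') and those not, together with, for each current internal vertex $v$ of the partial tree, a count of how many more pendant edges $v$ still needs to reach degree $k+1$.

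The main engine is an augmentation lemma: given a partial tree $T$ with $V(T)\ne V(G)$ satisfying the degree condition on its would-be internal vertices, and given the neighborhood-union hypothesis, one can always enlarge $T$ — either by attaching a new vertex as a (temporary) leaf, or, when a leaf $u$ of $T$ has $d_G(u)\ge k+1$ and enough private neighbors outside $T$, by promoting $u$ to an internal vertex and hanging $k$ or more new leaves off it. The condition $NC(G)\ge (n-2)/2$ enters precisely here: for any two non-adjacent vertices their neighborhoods together cover more than half the graph, so one cannot have a large independent-ish set of vertices all of whose neighborhoods are trapped inside a small covered region; this forces, at every stage where $|V(T)|$ is not yet close to $n$, the existence of a leaf of $T$ with a usable supply of outside-neighbors. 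I would quantify this with a counting argument: if no good augmentation existed, then every vertex outside $T$ would have almost all its neighbors outside $T$ while the frontier leaves of $T$ would have almost all their neighbors inside $T$, and picking a non-adjacent pair — one of each type — contradicts $NC(G)\ge (n-2)/2$ once $|V(T)|$ and $n-|V(T)|$ are both bounded below appropriately. The lower bound $n\ge n_1(k)$, i.e. $n-4c_k\sqrt n-12k+14\ge 0$, is what lets the error terms (the $O(c_k\sqrt n)$ vertices lost to ``bad'' local configurations and the $O(k)$ vertices spent seeding each branch) be absorbed.

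After the augmentation loop terminates we will have a spanning tree $T^*$ of $G$, but some vertices may be ``half-promoted'' — internal with degree strictly between $2$ and $k$, or leaves that got orphaned. I would then run a \emph{cleanup phase}: any internal vertex $v$ with $2\le d_{T^*}(v)\le k$ must have $d_G(v)\ge \delta(G)\ge 2k$, so it has at least $2k-k=k$ neighbors not currently used by its tree-edges; I reroute so that $v$ either collects $k+1-d_{T^*}(v)$ additional tree-neighbors (taking them as leaves, detaching them from elsewhere if necessary and patching the resulting components back via an edge incident to a high-degree internal vertex) or, alternatively, is itself demoted to a leaf by contracting the path through it — the latter only when its two tree-neighbors can absorb the rerouting. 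Doing these local surgeries without creating new mid-degree vertices is the delicate combinatorial part, and it is where I expect the main obstacle to lie: one needs an exchange argument, ordered so that fixing one vertex never reopens a previously-fixed one, which typically requires processing the internal vertices in a careful order (say, by decreasing distance from a fixed root) and a potential-function argument to show termination. Once no vertex of degree in $[2,k]$ remains, $T^*$ is the desired $[2,k]$-ST, and since Theorem~\ref{Thm1.6} asserts existence unconditionally (no exceptional families), the argument must show the above never gets stuck — which again comes down to the two inequalities $\delta(G)\ge 2k$ and $NC(G)\ge (n-2)/2$ together with $n\ge n_1(k)$.
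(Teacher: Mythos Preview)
Your proposal is not a proof but an outline, and the outline has a genuine gap at exactly the place you yourself flag: the cleanup phase. You write that rerouting to eliminate vertices of tree-degree in $[2,k]$ ``is the delicate combinatorial part, and it is where I expect the main obstacle to lie,'' and then offer only a vague potential-function hope. Nothing in the hypotheses prevents a cascade where fixing one mid-degree vertex creates another; the inequality $\delta(G)\ge 2k$ tells you that a bad vertex $v$ has $\ge k$ neighbors not currently used as tree-edges at $v$, but those neighbors may all be internal vertices of $T^*$ already at degree exactly $k+1$, so detaching any of them creates a new bad vertex. Your augmentation lemma has the same problem: the counting sketch (``pick a non-adjacent pair, one inside, one outside'') does not obviously produce the \emph{specific} configuration you need, namely a leaf of $T$ with at least $k$ neighbors outside $T$, rather than merely some edge crossing the boundary.

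The paper avoids all of this by a completely different, structural route. It first observes that if $\delta(G)\ge c_k\sqrt{n}$ then Theorem~\ref{Thm1.4} already gives a $[2,k]$-ST, so one may assume $2k\le \delta(G)<c_k\sqrt{n}$. Fixing a vertex $u$ of minimum degree and setting $W=V(G)\setminus N[u]$, the $NC$ condition forces every $w\in W$ to have $d_W(w)\ge \frac{n-2}{2}-\delta(G)$, which is at least $c_k\sqrt{n}$ once $n\ge n_1(k)$. Hence $G[W]$ (or each of its at most two components, by a short $NC$ argument) is itself dense enough for Theorem~\ref{Thm1.4} to apply, even after deleting up to $2k-2$ vertices. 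The entire proof then reduces to hand-building a small tree on $N[u]$ together with a bounded set $S\subseteq W$ (of size $O(k)$) in which every non-leaf already has degree $\ge k+1$ except for one or two designated vertices of degree exactly $k$, and gluing this to the $[2,k]$-ST of $W\setminus S$ (or of the two components) guaranteed by Theorem~\ref{Thm1.4}. There is no growth process and no cleanup: the large piece is handled in one shot by the minimum-degree theorem, and only the $O(\delta)=O(\sqrt{n})$ vertices near $u$ require explicit case analysis.
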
 

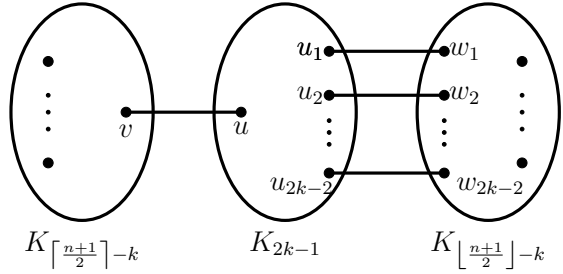
\begin{figure}[h]
\begin{center}
	\begin{tikzpicture}[scale=0.45]
	    \tikzstyle{every node}=[font=\normalsize,scale=0.9]
		\draw[very thick](0,0) ellipse [x radius=2.1cm, y radius=3.2cm];
		\draw[very thick](6,0) ellipse [x radius=2.1cm, y radius=3.2cm];
		\draw[very thick](12,0) ellipse [x radius=2.1cm, y radius=3.2cm];
		\filldraw (7.3,1.8)circle(0.8ex);
		\filldraw (7.3,0.5)circle(0.8ex);
		\filldraw (7.3,-0.25)circle(0.3ex);
		\filldraw (7.3,-0.6)circle(0.3ex);
		\filldraw (7.3,-0.95)circle(0.3ex);
		\filldraw (7.3,-1.8)circle(0.8ex);
		\filldraw (4.7,0)circle(0.8ex);
		\filldraw (10.7,1.8)circle(0.8ex);
		\filldraw (10.7,0.5)circle(0.8ex);
		\filldraw (10.7,-0.25)circle(0.3ex);
		\filldraw (10.7,-0.6)circle(0.3ex);
		\filldraw (10.7,-0.95)circle(0.3ex);
		\filldraw (10.7,-1.8)circle(0.8ex);
		\filldraw (13,1.5)circle(0.8ex);
		\filldraw (13,-1.5)circle(0.8ex);
		\filldraw (13,0.5)circle(0.3ex);
		\filldraw (13,0)circle(0.3ex);
		\filldraw (13,-0.5)circle(0.3ex);
		\draw[very thick] (7.3,1.8)--(10.7,1.8);
		\draw[very thick] (7.3,0.5)--(10.7,0.5);
		\draw[very thick] (7.3,-1.8)--(10.7,-1.8);
		\filldraw (1.3,0)circle(0.8ex);
		\filldraw (-1,1.5)circle(0.8ex);
		\filldraw (-1,-1.5)circle(0.8ex);
		\filldraw (-1,0.5)circle(0.3ex);
		\filldraw (-1,0)circle(0.3ex);
		\filldraw (-1,-0.5)circle(0.3ex);
		\draw[very thick] (1.3,0)--(4.7,0);
		\draw(1.3,0)[]node[below=1pt]{$v$};	
		\draw(4.7,0)[]node[below=0pt]{$u$};	
		\draw(7.3,1.8)[]node[left=-2pt]{$u_1$};	
        \draw(7.3,0.5)[]node[left=-2pt]{$u_2$};	
        \draw(7.8,-2.2)[]node[left=1pt]{$u_{2k-2}$};	
        \draw(10.7,1.8)[]node[right=-2pt]{$w_1$};	
        \draw(10.7,0.5)[]node[right=-2pt]{$w_2$};	
        \draw(10.7,-2.2)[]node[right=1pt]{$w_{2k-2}$};	
        \draw(7.3,1.8)[]node[left=-2pt]{$u_1$};	 
    \draw[very thick] (6,-3.2)node[below=0pt]{$K_{2k-1}$};
    
   \draw[very thick] (0,-3.2)node[below=0pt]
   {$K_{\left \lceil \frac{n+1}2\right \rceil-k}$};
   
      \draw[very thick] (12,-3.2)node[below=0pt]
   {$K_{\left \lfloor \frac{n+1}2\right \rfloor-k}$};
    
	\end{tikzpicture}
\end{center}  

	\caption{$H$ is obtained 
		from vertex-disjoint graphs
	$K_{2k-1}$, 
	$K_{\left \lceil \frac{n+1}2\right \rceil-k}$
	and $K_{\left \lfloor \frac{n+1}2\right \rfloor-k}$
by adding edges $uv$ and $u_iw_i$ for $i=1,2,\cdots,2k-2$}
\label{f1-}
\end{figure}

We conclude this section by noting that 
Theorem~\ref{Thm1.6} fails if $\delta(G)\ge 2k$ is replaced 
by $\delta(G)\ge 2k-1$. Assume that $2\le k\le\frac{n}{6}$. Let $H$ be the graph obtained from vertex-disjoint graphs $K_{2k-1}$, $K_{\lceil\frac{n+1}{2}\rceil-k}$, and $K_{\left \lfloor \frac {n+1}2\right \rfloor-k}$ by adding an edge $uv$ with $u\in V(K_{2k-1})$ and $v\in V(K_{\lceil\frac{n+1}{2}\rceil-k})$, and by joining each vertex $u_i \in V(K_{2k-1})\setminus\{u\}$ to some vertex $w_i \in V(K_{\left \lfloor \frac {n+1}2\right \rfloor-k})$, as shown in Figure~\ref{f1-}, where the vertices $w_i$ are not necessarily distinct. 
It is easy to verify that $\delta(H)=2k-1$, $NC(H)\ge\frac{|H|-2}{2}$, and $u$ is a cut-vertex of $H$ with $d_{H}(u)=2k-1$. Lemma~\ref{le2.1} in the next section shows that $H$ has no $[2,k]$-STs for all $k\ge 2$.

\medskip
\noindent{\bf Remark:} 
It can be proved easily that $\sigma(G)\ge n-2$ implies that $NC(G)\ge\frac{n-2}{2}$. Moreover, one can verify that 
$n_0(k)\ge n_1(k)$, and 
thus $n\ge n_0(k)$
implies that 
$n\ge n_1(k)$. 
Hence, Theorem~\ref{Thm1.6} is a generalization of Theorem~\ref{Thm1.5}.


\section{Preliminaries}
We begin this section by showing that the graph $H$ illustrated in Figure~\ref{f1-} does not contain a $[2,k]$-ST for $k\ge2$.

\subsection{$H$ has no $[2,k]$-STs}

\begin{lemma} \label{le2.1}
For any $k\ge2$, the graph $H$ 
in Figure~\ref{f1-} does not contain a $[2,k]$-ST.	
\end{lemma}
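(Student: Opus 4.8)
The plan is to argue by contradiction: assume $H$ has a $[2,k]$-ST $T$ and extract a contradiction from the fact that the cut-vertex $u$ is forced to have large degree in $T$. Throughout I write $L$, $M=\{u,u_1,\dots,u_{2k-2}\}$ and $R$ for the vertex sets of the three cliques $K_{\lceil(n+1)/2\rceil-k}$, $K_{2k-1}$ and $K_{\lfloor(n+1)/2\rfloor-k}$. The first thing I would record are the easy structural facts about $H$: the edge $uv$ is the unique edge of $H$ between $L$ and $V(H)\setminus L$, hence it is a bridge and lies in every spanning tree, so $uv\in E(T)$; the edges $u_iw_i$ ($1\le i\le 2k-2$) are the only edges between $M$ and $R$; $u$ is a cut-vertex of $H$; and, since $2\le k\le n/6$, the set $R$ is nonempty.

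Next I would use the $[2,k]$-condition at $u$. As $u$ is a cut-vertex, $d_T(u)\ge 2$, and since $T$ has no vertex of degree in $\{2,\dots,k\}$ we get $d_T(u)\ge k+1$; combined with $uv\in E(T)$, this means at least $k$ of the edges $uu_1,\dots,uu_{2k-2}$ lie in $T$, and after relabelling I may assume $uu_1,\dots,uu_m\in E(T)$ with $k\le m\le 2k-2$. The key step I would then isolate is that each of $u_1,\dots,u_m$ is a leaf of $T$. For $1\le i\le m$ no edge $u_iu_j$ with $j\le m$ and $j\ne i$ can belong to $T$, since together with $uu_i$ and $uu_j$ it would form a triangle; hence the only possible neighbours of $u_i$ in $T$ are $u$, the $2k-2-m$ vertices $u_{m+1},\dots,u_{2k-2}$, and $w_i$, giving $d_T(u_i)\le 2k-m\le k$. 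Since $d_T(u_i)\notin\{2,\dots,k\}$, this forces $d_T(u_i)=1$.

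Finally I would cash this in through the bridge $uv$. Each $u_i$ ($1\le i\le m$) is a leaf whose unique incident edge is $uu_i$, and $u$'s only remaining edge in $T$ is $uv$; hence the only edge of $T$ leaving $S:=\{u,u_1,\dots,u_m\}$ is $uv$, so one component of $T-uv$ is exactly $S\subseteq M$. On the other hand, the component of $T-uv$ containing $v$ must be contained in $L$, because any path of $T$ that leaves $L$ must use the bridge $uv$. Therefore $V(H)$ is contained in $M\cup L$, contradicting $R\ne\emptyset$. The step I expect to be the crux — rather than a genuine obstacle, but the one point where the argument could go wrong — is the triangle count bounding $d_T(u_i)$: it is what makes the ``star at $u$'' rigid for every $k\ge2$ at once, and after that the bridge argument closes the proof with no case analysis.
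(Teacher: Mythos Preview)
Your proof is correct and follows essentially the same line as the paper's: use that $u$ is a cut-vertex to force $d_T(u)\ge k+1$, then count to show the relevant $u_i$ must be leaves of $T$, and finish with a connectivity contradiction. The only cosmetic difference is in the endgame: the paper bounds $d_T(u_i)$ for \emph{all} $u_i$ at once (using $|N_T[u]|\ge k+2$) and then invokes that $\{u_1,\dots,u_{2k-2}\}$ separates $R$ from the rest, whereas you bound $d_T(u_i)$ only for the $T$-neighbours of $u$ via the triangle argument and then use the bridge $uv$ to trap the $v$-side of $T-uv$ inside $L$; both routes close the argument equally cleanly.
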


\begin{proof}
	Suppose that $T$ is a $[2,k]$-ST of $H$. Since $u$ is a cut-vertex of $H$, by the definition of a $[2,k]$-ST, 
	we have $d_T(u)\ge k+1$. Then for each $u_i\in V(K_{2k-1})\setminus\{u\}$,
	we have 
	$$
	|N_T(u_i)\cap N_T[u]|\le1,
	$$
	and thus 
	\eqn{}
	{d_T(u_i)&\le&|(V(K_{2k-1})\cup\{w_i\})\setminus(N_T[u]\cup\{u_i\})|+|N_T(u_i)\cap N_T[u]|\nonumber\\ 
	&\le&|V(K_{2k-1})|+1
	-(|N_T[u]|+1)+1\nonumber\\ 
	&\le&2k-1+1-(k+3)+1=k-2<k+1,\nonumber
    }
	implying that all vertices in $V(K_{2k-1})\setminus\{u\}$ are leaves in $T$. 
	
	On the other hand, since $H-
	(V(K_{2k-1})\setminus\{u\})$ is disconnected, so is $T-(V(K_{2k-1})\setminus\{u\})$. Hence, there must exist a vertex in $V(K_{2k-1}) \setminus \{u\}$ that is not a leaf of $T$, a contradiction.\end{proof}

\vspace{3mm}

For the rest of this section, we assume that 
$G$ is a connected graph 
of order $n$ with $n>\delta(G)+1$ (i.e., $G$ is not complete)
and $NC(G)\geq\frac{n-2}{2}$
(i.e.,  the condition of (\ref{Thm1.6-e1}) holds). Denote by $[k]$ the set $\{1, 2, \ldots, k\}$. Let $u$ be a vertex in $G$ with 
$d(u)=\delta(G)$, $N(u)=\{u_i:i\in[\delta(G)]\}$ and $W=V(G)\backslash N[u]$. Since $n>\delta(G)+1$ and $d(u)=\delta(G)$, 
we have $W\neq\emptyset$ and $E(N(u),W)\neq\emptyset$.

\subsection{Non-complete graphs $G$ with $NC(G)\ge \frac{n-2}2$}

In this subsection, we will 
mainly provide a sufficient condition for a subset $S$
of $W$ such that $G[W\setminus S]$ has at most two components.
We will also provide some other 
conclusions. These conclusions will be used in the proof of Theorem~\ref{Thm1.6}.

\begin{lemma}\label{le2.2}
	If $\delta(G)<\frac{n-4k+4}{2}$, 
	then $\{u_i\in N(u): |N_W(u_i)|\leq k-1\}$  is a clique.
\end{lemma}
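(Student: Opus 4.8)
The goal is to show that if $\delta(G)$ is small (below $\frac{n-4k+4}{2}$), then the set $A := \{u_i \in N(u) : |N_W(u_i)| \le k-1\}$ induces a clique. The natural strategy is to argue by contradiction: suppose $u_i, u_j \in A$ with $u_i u_j \notin E(G)$. Then the neighborhood union condition $|N_G(u_i) \cup N_G(u_j)| \ge \frac{n-2}{2}$ applies to this non-adjacent pair. I would bound $|N_G(u_i) \cup N_G(u_j)|$ from above by splitting the neighbors according to where they live: inside $W$, or inside $N[u]$. The $W$-part contributes at most $|N_W(u_i)| + |N_W(u_j)| \le 2(k-1) = 2k-2$ vertices. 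The $N[u]$-part contributes at most $|N[u]| = \delta(G)+1$ vertices. Hence $\frac{n-2}{2} \le |N_G(u_i)\cup N_G(u_j)| \le (2k-2) + (\delta(G)+1) = \delta(G) + 2k - 1$, which rearranges to $\delta(G) \ge \frac{n-2}{2} - 2k + 1 = \frac{n-4k}{2}$.

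**Tightening the bound.** The inequality $\delta(G) \ge \frac{n-4k}{2}$ is not quite a contradiction with the hypothesis $\delta(G) < \frac{n-4k+4}{2}$, so I need to sharpen the count by $2$. The slack should come from observing that $u \in N[u]$ but $u \notin N_G(u_i)\cup N_G(u_j)$: if $u$ were a neighbor of, say, $u_i$, that is fine since $u_i \in N(u)$, so actually $u \in N_G(u_i)$ — wait, that goes the wrong way. Let me instead exploit $u_i, u_j$ themselves: since $u_i u_j \notin E(G)$, we have $u_j \notin N_G(u_i)$ and $u_i \notin N_G(u_j)$, and also $u_i \notin N_G(u_i)$, $u_j \notin N_G(u_j)$ (simple graph, no loops). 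So among the $\delta(G)+1$ vertices of $N[u]$, the two vertices $u_i, u_j$ lie outside $N_G(u_i) \cup N_G(u_j)$, giving a $W$-and-$N[u]$ count of at most $(2k-2) + (\delta(G)+1-2) = \delta(G) + 2k - 3$. Then $\frac{n-2}{2} \le \delta(G) + 2k - 3$, i.e. $\delta(G) \ge \frac{n-2}{2} - 2k + 3 = \frac{n - 4k + 4}{2}$, directly contradicting the hypothesis $\delta(G) < \frac{n-4k+4}{2}$.

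**Expected main obstacle.** The combinatorial idea is short; the only delicate point is the bookkeeping of exactly which vertices to remove from the ambient set to get the "$-3$" (equivalently "$-2$" beyond the naive split) and to make sure no vertex is double-counted between the $W$-part and the $N[u]$-part. Since $W$ and $N[u]$ partition $V(G)$, the split $N_G(u_i)\cup N_G(u_j) = \big((N_G(u_i)\cup N_G(u_j)) \cap W\big) \,\cup\, \big((N_G(u_i)\cup N_G(u_j)) \cap N[u]\big)$ is a disjoint union, so there is no overlap issue; I just need $|(N_G(u_i)\cup N_G(u_j))\cap W| \le |N_W(u_i)| + |N_W(u_j)|$ and $|(N_G(u_i)\cup N_G(u_j))\cap N[u]| \le |N[u]| - 2 = \delta(G) - 1$ (the $-2$ from excluding $u_i$ and $u_j$). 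I would write the proof essentially in two displayed inequalities plus one line of arithmetic, and the whole thing should be a few lines long.
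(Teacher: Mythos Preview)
Your proposal is correct and follows essentially the same argument as the paper: assume two non-adjacent vertices $u_p,u_q$ lie in the set, observe $N(u_p)\cup N(u_q)\subseteq (N[u]\setminus\{u_p,u_q\})\cup N_W(u_p)\cup N_W(u_q)$, and bound the size by $(\delta(G)-1)+2(k-1)=\delta(G)+2k-3<\frac{n-2}{2}$, contradicting the $NC$ condition. Your ``tightening'' step (excluding $u_i,u_j$ from the $N[u]$-count) is exactly the containment the paper uses.
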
 

\begin{proof} 
	Let $S=\{u_i\in N(u): |N_W(u_i)|\leq k-1\}$. 
	Suppose the $S$ is not a clique of $G$. Then 
	there exist two vertices 
	$u_p, u_q\in S$ such that $u_pu_q\notin E(G)$.
	Note that 
	$$
	N(u_p)\cup N(u_q)\subseteq(N[u]\backslash\{u_p,u_q\})\cup N_W(u_p)\cup N_W(u_q).
	$$ 
	Then 
	$$|N(u_p)\cup N(u_q)|\leq(\delta(G)+1-2)+2(k-1)
	<\frac{n-4k+4}{2}+2k-3
	=\frac{n-2}{2},$$
	which contradicts the assumption that $NC(G)\ge \frac{n-2}2$.
\end{proof}

\medskip

Note that for each $w\in W$, as $uw\notin E(G)$, $|N(u)\cup N(w)|\geq NC(G)\ge \frac{n-2}{2}$
by the given condition. 
Thus, 
\begin{equation}\label{e3} 
	\forall w\in W: \quad	d_{W}(w)\geq \frac{n-2}{2}-\delta(G).
\end{equation}

\begin{lemma} \label{le2.3}
	For any $S\subset W$ with $|S|<\frac{n+2}{4}-\delta(G)$, $G[W\backslash S]$ contains at most two components. 
\end{lemma}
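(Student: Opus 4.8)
The plan is to show that if $G[W \setminus S]$ had three or more components, then two vertices lying in different components would have neighbourhood union too small to satisfy $NC(G) \ge \frac{n-2}{2}$. First I would suppose for contradiction that $G[W \setminus S]$ has components $C_1, C_2, \ldots, C_t$ with $t \ge 3$. Pick a vertex $w_i \in C_i$ for $i = 1, 2, 3$, chosen (if possible) so that $w_i$ lies in a smallest component among $C_1, \ldots, C_t$; at least two of $w_1, w_2, w_3$, say $w_1$ and $w_2$, lie in components of size at most $|W \setminus S|/2$. Since $w_1$ and $w_2$ lie in distinct components of $G[W\setminus S]$, we have $w_1 w_2 \notin E(G)$, so $|N_G(w_1) \cup N_G(w_2)| \ge NC(G) \ge \frac{n-2}{2}$.

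Next I would bound $|N_G(w_1) \cup N_G(w_2)|$ from above. Each $N_G(w_i)$ is contained in $N[u] \cup S \cup (C_{j(i)} \setminus \{w_i\})$, where $C_{j(i)}$ is the component of $G[W\setminus S]$ containing $w_i$; since $C_{j(1)}$ and $C_{j(2)}$ are disjoint and neither contains $w_1$ nor $w_2$ beyond the obvious, the union $N_G(w_1) \cup N_G(w_2)$ is contained in $N[u] \cup S \cup (C_{j(1)} \cup C_{j(2)}) \setminus \{w_1, w_2\}$. Using $|N[u]| = \delta(G)+1$, $|S| < \frac{n+2}{4} - \delta(G)$, and $|C_{j(1)}| + |C_{j(2)}| \le |W\setminus S|$ together with the fact that at least one more component $C_3$ exists (so $|C_{j(1)}|+|C_{j(2)}| \le |W\setminus S| - 1 \le |W| - |S| - 1$, and in fact I can likely do better by using that two of the three chosen components have size at most half of $|W\setminus S|$), I would obtain an explicit upper bound on $|N_G(w_1)\cup N_G(w_2)|$. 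Since $|W| = n - \delta(G) - 1$, this bound should come out strictly less than $\frac{n-2}{2}$ after substituting the hypothesis on $|S|$, yielding the contradiction. The key numerical input is that $|S|$ is small enough that $\delta(G) + 1 + |S|$ plus "half of what remains of $W$" stays below $\frac{n-2}{2}$.

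The main obstacle I anticipate is getting the counting tight enough: a crude bound $|C_{j(1)}| + |C_{j(2)}| \le |W| - |S|$ may not suffice, so I expect to need the refinement that among three (or more) components, two have size at most $\frac{1}{2}|W \setminus S|$ — hence $|C_{j(1)}| + |C_{j(2)}| \le |W| - |S|$ can be sharpened using that a third component is nonempty, or better, that $|C_{j(1)}|, |C_{j(2)}| \le \frac{1}{3}(|W|-|S|)$ is false in general but two of them summing to at most $|W|-|S|$ minus the third's size is available. Concretely, I would take $w_1, w_2$ in the two smallest components, so $|C_{j(1)}| + |C_{j(2)}| \le \frac{2}{3}(|W| - |S|)$ when $t \ge 3$. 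Then the upper bound becomes roughly $\delta(G) + |S| + \frac{2}{3}(|W|-|S|) = \delta(G) + \frac{2}{3}|W| + \frac{1}{3}|S|$, and plugging $|W| = n-\delta(G)-1$ and $|S| < \frac{n+2}{4} - \delta(G)$ should give something below $\frac{n-2}{2}$; I would carry out this arithmetic carefully, watching the $\delta(G)$ terms cancel favourably since the coefficient of $\delta(G)$ works out to $1 - \frac{2}{3} - \frac{1}{3} = 0$. If the two-smallest-components estimate is still not quite enough, a fallback is to also throw in the observation that $w_1$ itself is not counted, trimming one more vertex, or to treat the case $t \ge 4$ separately where the bound is even easier.
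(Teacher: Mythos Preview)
Your overall strategy---force a contradiction with $NC(G)\ge\frac{n-2}{2}$---is right, but your choice of nonadjacent pair is the wrong one, and the arithmetic you anticipate does not close. Taking $w_1,w_2$ in the two smallest of at least three components and using $|C_{j(1)}|+|C_{j(2)}|\le\frac{2}{3}|W\setminus S|$, you obtain
\[
|N(w_1)\cup N(w_2)|\;\le\; \delta(G)+|S|+\tfrac{2}{3}\bigl(|W|-|S|\bigr)-2
\;=\;\tfrac{1}{3}\bigl(\delta(G)+|S|\bigr)+\tfrac{2}{3}(n-1)-2.
\]
After substituting $\delta(G)+|S|<\frac{n+2}{4}$ the $\delta(G)$-terms do indeed cancel, as you predicted, but what remains is $\frac{3n-10}{4}$, which is \emph{larger} than $\frac{n-2}{2}$ for every $n\ge 6$. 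So no contradiction results. (Concretely: with $n=100$, $\delta(G)=10$, $S=\emptyset$, and three components of sizes $30,30,29$, your bound gives $67$ while $\frac{n-2}{2}=49$.) The fallback you mention---handling $t\ge 4$ separately, or trimming $w_1,w_2$ from the count---does not help either; you are off by a term of order $n$, not by a constant.

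The idea you are missing is to pair a vertex $x$ from the \emph{smallest} component $C_0$ with $u$ itself, rather than with another vertex of $W$. Since $x\in W$ we have $ux\notin E(G)$, and $N(u)\cup N(x)\subseteq N(u)\cup S\cup(C_0\setminus\{x\})$ with $|C_0|\le\frac{1}{3}|W\setminus S|$. This replaces the $\frac{2}{3}|W\setminus S|$ term in your estimate by $\frac{1}{3}|W\setminus S|$, and now
\[
|N(u)\cup N(x)|\;\le\;\delta(G)+|S|+\tfrac{1}{3}(n-1-\delta(G)-|S|)-1
\;=\;\tfrac{2}{3}\bigl(\delta(G)+|S|\bigr)+\tfrac{1}{3}(n-1)-1\;<\;\tfrac{n-2}{2},
\]
the last step by $\delta(G)+|S|<\frac{n+2}{4}$. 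This is precisely the paper's argument.
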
 

\begin{proof}
	Suppose that $G[W\backslash S]$ contains at least three components. Then there is a component $C_0$ of $G[W\backslash S]$ satisfying 
	\begin{equation} \label{e4} 
		|C_0|\leq\frac{|W\backslash S|}{3}
		=
		\frac{n-1-\delta(G)-|S|}{3}.
	\end{equation} 
	Note that for each $x\in C_0$, $N(x)\subseteq(C_0\backslash\{x\})\cup S\cup N(u)$, 
	implying that 
	\begin{eqnarray} \label{e5}
		|N(u)\cup N(x)|
		&\leq& |C_0|-1+|S|+\delta(G)
		\ \leq\ 
		\frac{n+2|S|+2\delta(G)-4}{3}
		\nonumber \\
		&<&
		\frac{n+2\cdot\frac{n+2}{4}-4}{3}
		\ =\ \frac{n-2}{2},
	\end{eqnarray} 
	a contradiction to the condition of (\ref{Thm1.6-e1}). So $G[W\backslash S]$ contains at most two components.
\end{proof}

\begin{lemma}\label{le2.4}
	Assume that $S\subset W$ and $G[W\backslash S]$ contains exactly two components. Then, for each component $C$, 
	\begin{enumerate}
		\item[(i)] $\frac{n}{2}-\delta(G)-|S|\leq|C|\leq\frac{n-2}{2}$, and
		
		\item[(ii)]  if $n\geq n_1(k)$
		and $|S|+\delta(G)< \frac{n-4k+6}4$,
		then $C-S'$ contains a
		$[2,k]$-ST
		for any $S'\subseteq C$ with $|S'|\leq2k-2$.
	\end{enumerate}
\end{lemma}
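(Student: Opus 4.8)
The plan is to treat the two parts separately, with part (i) being a short counting argument and part (ii) requiring a more substantial structural analysis that invokes Theorem~\ref{Thm1.4}. For part (i), write $C$ and $C'$ for the two components of $G[W\setminus S]$, so that $|C|+|C'| = |W|-|S| = n-1-\delta(G)-|S|$. The upper bound $|C|\le\frac{n-2}{2}$ should follow from the neighborhood union condition applied to a vertex $x\in C$: since $uw\notin E(G)$ for every $w\in W$, and a vertex of the larger component cannot see the smaller component, I expect $N(u)\cup N(x)\subseteq N[u]\cup(C\setminus\{x\})\cup S$, and combining $|N(u)\cup N(x)|\ge\frac{n-2}{2}$ with $|C'|\ge 1$ forces $|C|\le\frac{n-2}{2}$ after rearranging. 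For the lower bound, simply combine $|C'|\le\frac{n-2}{2}$ with the identity $|C|+|C'|=n-1-\delta(G)-|S|$ to get $|C|\ge n-1-\delta(G)-|S|-\frac{n-2}{2} = \frac{n}{2}-\delta(G)-|S|$. (Here I'm using that $C'$, being a component too, also satisfies the upper bound.)

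For part (ii), fix a component $C$ and a set $S'\subseteq C$ with $|S'|\le 2k-2$; write $C'' = C-S'$ and let $m=|C''|$. The strategy is to verify that $C''$ is connected and that $\delta(C'')\ge c_k\sqrt{m}$, so that Theorem~\ref{Thm1.4} yields a $[2,k]$-ST of $C''$. For connectivity: by part (i), $|C|\ge\frac{n}{2}-\delta(G)-|S| > \frac{n}{2}-\frac{n-4k+6}{4} = \frac{n+4k-6}{4}$, which is large; meanwhile every vertex $x\in C$ has $d_C(x)\ge d_W(x)-|S|\ge\frac{n-2}{2}-\delta(G)-|S|$ by~(\ref{e3}). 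Deleting at most $2k-2$ vertices leaves each surviving vertex with degree at least $\frac{n-2}{2}-\delta(G)-|S|-(2k-2)$ inside $C''$; since $|C|\le\frac{n-2}{2}$, this is more than half of $|C''|$ once $n$ is large, so $C''$ is connected (a graph on $m$ vertices with minimum degree $>m/2$ is connected). For the degree bound: I need $\frac{n-2}{2}-\delta(G)-|S|-(2k-2)\ge c_k\sqrt{m}$. Using $m\le|C|\le\frac{n-2}{2}$ gives $c_k\sqrt m\le c_k\sqrt{\frac{n-2}{2}}$, so it suffices that $\frac{n-2}{2}-\delta(G)-|S|-2k+2\ge c_k\sqrt{(n-2)/2}$. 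Since we are given $|S|+\delta(G)<\frac{n-4k+6}{4}$, the left side exceeds $\frac{n-2}{2}-\frac{n-4k+6}{4}-2k+2 = \frac{n}{4}-1$ (after simplification), so it remains to check $\frac{n}{4}-1\ge c_k\sqrt{(n-2)/2}$, i.e. $n - 4 \ge 4c_k\sqrt{(n-2)/2} \le 4c_k\sqrt{n}/\sqrt 2 = 2\sqrt 2\, c_k\sqrt n$. This is implied by $n-4c_k\sqrt n-12k+14\ge 0$, which holds for $n\ge n_1(k)$ by the definition of $n_1(k)$ (with room to spare, since $2\sqrt2 < 4$).

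The main obstacle I anticipate is bookkeeping the constants in part (ii): one must be careful that the bound $\delta(C'')\ge c_k\sqrt{|C''|}$ is checked against $|C''|$ (not against $|C|$ or $n$), and that the deletion of $S$ and $S'$ together, totaling fewer than $\frac{n-4k+6}{4}+2k-2$ vertices, does not erode the minimum degree below the threshold $c_k\sqrt m$. The inequalities are tight enough that the precise choice $n_1(k)$ — defined via $n-4c_k\sqrt n-12k+14\ge 0$ — matters, so I would carry the arithmetic explicitly at the end rather than appeal to "$n$ large". I'd also double-check the connectivity claim handles the degenerate case where $C''$ is small or empty; but since $|C''|\ge|C|-(2k-2)$ is large by part (i) and the hypothesis on $|S|+\delta(G)$, this case does not arise. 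Everything else is routine substitution into~(\ref{e3}), part~(i), and Theorem~\ref{Thm1.4}.
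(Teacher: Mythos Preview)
Your overall plan is the same as the paper's: part~(i) via~(\ref{e3}) and the partition identity, part~(ii) by verifying the hypotheses of Theorem~\ref{Thm1.4} on $C-S'$. One arithmetic slip: $\frac{n-2}{2}-\frac{n-4k+6}{4}-(2k-2)=\frac{n-4k-2}{4}$, not $\frac{n}{4}-1$. This does not break the minimum-degree step, since $\frac{n-4k-2}{4}\ge\frac{n-12k+14}{4}\ge c_k\sqrt{n}\ge c_k\sqrt{|C-S'|}$ for $k\ge2$ and $n\ge n_1(k)$ (this is exactly the chain the paper uses, comparing against $\sqrt{n}$ rather than $\sqrt{(n-2)/2}$).

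The connectivity step, however, has a real gap as written. Your claim $\delta(C'')>|C''|/2$ does not follow from the hypotheses: when $|S'|=2k-2$ and $\delta(G)+|S|$ is just below $\frac{n-4k+6}{4}$, the lower bound on $\delta(C'')$ is about $\frac{n-4k-2}{4}$, while $|C''|/2$ can be as large as $\frac{n-4k+2}{4}$. What the hypotheses \emph{do} yield is the slightly weaker $\delta(C'')>\frac{|C''|}{2}-1$, which still forces connectivity. The paper extracts precisely this via the contrapositive: if $C-S'$ had two components $H_1,H_2$, then for $x\in H_1$ and $y\in H_2$,
\[
d_{H_1}(x)+d_{H_2}(y)\;\ge\; n-2-2\bigl(\delta(G)+|S|\bigr)-2|S'| \;>\; \tfrac{n-2}{2}-2-|S'|\;\ge\; |C|-2-|S'|\;=\;|H_1|+|H_2|-2,
\]
contradicting $d_{H_j}(\cdot)\le |H_j|-1$. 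The point is that only one of the two copies of $|S'|$ is replaced by $2k-2$, so the comparison lands on $|C''|-2$ rather than $|C''|$. Swap your $\delta>m/2$ criterion for this sum argument (or, equivalently, use $\delta\ge\lfloor m/2\rfloor$ together with the sharper bound $|C''|\le\frac{n-2}{2}-|S'|$) and the rest of your proposal goes through.
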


\begin{proof} (i) Let $C_1$ and $C_2$ be two components of $G[W\setminus S]$. For $i\in [2]$, let $x$ be any vertex in $C_i$. Then by (\ref{e3}), we have
	\equ{e6}
	{d_{C_i}(x)=d_{W\backslash S}(x)\geq d_W(x)-|S|\geq\frac{n-2}{2}
		-\delta(G)-|S|.}
	Then 
	$$
	|C_i|\geq d_{C_i}(x)+1\geq\frac{n}{2}-\delta(G)-|S|,
	$$ 
	implying that 
	$$
	|C_{3-i}|=|W\backslash S|-|C_{i}|\leq\left(n-1-\delta(G)-|S|\right)-\left(\frac{n}{2}-\delta(G)-|S|\right)=\frac{n-2}{2}.
	$$ 
	Hence (i) holds.

	(ii) We will apply Theorem~\ref{Thm1.4} to prove (ii). 
	Assume that $n\geq n_1(k)$
	and $|S|+\delta(G)< \frac{n-4k+6}4$.
	Let $S'\subseteq C$ 
	with $|S'|\le 2k-2$. 
	In order to prove this conclusion, by 
	Theorem~\ref{Thm1.4}, 
	it suffices to show that 
	$C-S'$ is connected 
	and $\delta(C-S')\ge c_k\sqrt{|C-S'|}$.
	
	\inclaim $C-S'$ is connected
	for any $S'\subseteq C$ 
	with $|S'|\le 2k-2$.
	
	Suppose that $C-S'$ is disconnected
	for some $S'\subseteq C$ 
	with $|S'|\le 2k-2$.
	Let $H_1$ and $H_2$ be any two components of $C-S'$. 
	For each $w'\in C-S'$, 
	by (\ref{e3}), we have
	{\begin{eqnarray}\label{e7}
		d_{H_j}(w')=d_W(w')-d_S(w')
		-d_{S'}(w')
		\ge	\frac{n-2}{2}
		-\delta(G)-|S|-|S'|.
	\end{eqnarray} }
	Assume that  $x\in H_1$ and $y\in H_2$.
	Then, by (\ref{e7}), 
	\equ{e8}
	{
		d_{H_1}(x)+d_{H_2}(y)
		\ge
		2\left(\frac{n-2}{2}
		-\delta(G)-|S|-|S'|\right)
		=n-2-2\left(\delta(G)+|S|\right)
		-2|S'|.
	}	
	Since $|S'|\le 2k-2$ and $\delta(G)+|S|< 
	\frac{n-4k+6}4$, (\ref{e8}) implies that 
	\begin{eqnarray}\label{e9}
		d_{H_1}(x)+d_{H_2}(y)
		&>&n-2-2\cdot \frac{n-4k+6}4-(2k-2)-|S'|
		=\frac{n-2}{2}-2-|S'|
		\nonumber \\
		&\ge& |C|-2-|S'|
		\	=\ |H_1|-1+|H_2|-1,
	\end{eqnarray} 
	a contradiction to the fact that 
	$d_{H_1}(x)\le |H_1|-1$ and 
	$d_{H_2}(y)\le |H_2|-1$, 
	where the last inequality 
	follows from the result in (i)
	that $|C|\le \frac{n-2}2$.
	Hence Claim 1 holds.
	

\inclaim $\delta(C-S')\ge c_k\sqrt{|C-S'|}$.

	
	Let $w\in C-S'$ with $d_{C-S'}(w)=\delta(C-S')$. Then 
	by (\ref{e6}), 
	\equ{e10}
	{
		\delta(C-S')=d_{C-S'}(w)
		=d_{C}(w)-d_{S'}(w)
		\ge \frac{n-2}{2}
		-\delta(G)-|S|-|S'|.
	}
	Since $\delta(G)+|S|<\frac{n-4k+6}4$, by (\ref{e10}), we have 
	\equ{e11}
	{\delta(C-S')
		>\frac{n-2}{2}-\frac{n-4k+6}{4}-|S'|
		=\frac{n+4k-10}4-|S'|.
	}
	Since $|S'|\le 2k-2$ and $k\ge2$, (\ref{e11}) implies that 
	\eqn{e15}
	{\delta(C-S')&>& \frac{n+4k-10}4-|S'|\ge \frac{n+4k-10}4-(2k-2)\nonumber \\
		&=&\frac{n-4k-2}4\ge \frac{n-12k+14}{4}
		\nonumber \\
		&\ge& c_k\sqrt{n}
		\ge c_k\sqrt{|C-S'|},
	}
	where the second last inequality follows from the condition $n\ge n_1(k)$, and the last inequality follows from 
	the fact that $n\ge |C-S'|$. Hence Claim 2 holds.
	
	By Claims 1 and 2, (ii) holds.
\end{proof}

\section{$i$-semi-$[2,k]$-T and $i$-quasi-$[2,k]$-T} 

A subtree $T$ of $G$ is called a $[2,k]$-T of $G$ if it has no vertices of degrees $2$ through $k$.





\begin{definition}\label{def2.6}
For any $i\ge 1$, 
an $i$-semi-$[2,k]$-T of $G$ is a $[2,k-1]$-T which has
exactly $i$  
vertices of degree $k$,
and an $i$-semi-$[2,k]$-ST of $G$ is an $i$-semi-$[2,k]$-T
which is a spanning tree of $G$. 
\end{definition}

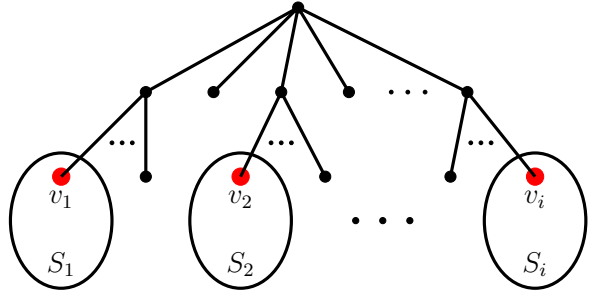
\begin{figure}[h]
	\begin{center}  
		\begin{tikzpicture}[scale=0.45]
			\tikzstyle{every node}=[font=\normalsize,scale=0.9]
			\draw[very thick](-1,4.2) ellipse [x radius=1.5cm, y radius=2cm];
			\draw[very thick](4.3,4.2) ellipse [x radius=1.5cm, y radius=2cm];
			\draw[very thick](13,4.2) ellipse [x radius=1.5cm, y radius=2cm];
			\filldraw (6,10.5)circle(0.9ex);
			\filldraw (1.5,8)circle(0.9ex);
			\filldraw (3.5,8)circle(0.9ex);
			\filldraw (5.5,8)circle(0.9ex);
			\filldraw (7.5,8)circle(0.9ex);
			\filldraw (8.75,8)circle(0.3ex);
			\filldraw (9.24,8)circle(0.3ex);
			\filldraw (9.75,8)circle(0.3ex);
			\filldraw (11,8)circle(0.9ex);
			\draw[very thick] (6,10.5)--(1.5,8);
			\draw[very thick] (6,10.5)--(3.5,8);
			\draw[very thick] (6,10.5)--(5.5,8);
			\draw[very thick] (6,10.5)--(7.5,8);
			\draw[very thick] (6,10.5)--(11,8);
			\filldraw[red] (-1,5.5)circle(1.4ex);
			\filldraw (1.5,5.5)circle(0.9ex);
			\filldraw[red] (4.3,5.5)circle(1.4ex);
			\filldraw (6.8,5.5)circle(0.9ex);
			\filldraw (10.5,5.5)circle(0.9ex);
			\filldraw[red] (13,5.5)circle(1.4ex);
			\filldraw (0.5,6.5)circle(0.3ex);
			\filldraw (0.8,6.5)circle(0.3ex);
			\filldraw (1.1,6.5)circle(0.3ex);
			\draw[very thick] (1.5,8)--(-1,5.5);
			\draw[very thick] (1.5,8)--(1.5,5.5);
			\filldraw (5.2,6.5)circle(0.3ex);
			\filldraw (5.5,6.5)circle(0.3ex);
			\filldraw (5.8,6.5)circle(0.3ex);
			\draw[very thick] (5.5,8)--(4.3,5.5);
			\draw[very thick] (5.5,8)--(6.8,5.5);
			\filldraw (11.7,6.5)circle(0.3ex);
			\filldraw (11.4,6.5)circle(0.3ex);
			\filldraw (11.1,6.5)circle(0.3ex);
			\draw[very thick] (11,8)--(10.5,5.5);
			\draw[very thick] (11,8)--(13,5.5);
			\filldraw (7.7,4.2)circle(0.5ex);
			\filldraw (8.5,4.2)circle(0.5ex);
			\filldraw (9.3,4.2)circle(0.5ex);
			\draw(-1,5.5)[]node[below=2pt]{$v_1$};	
			\draw(4.3,5.5)[]node[below=2pt]{$v_2$};	
			\draw(13,5.5)[]node[below=2pt]{$v_i$};	
			\draw(-1,3.7)[]node[below=2pt]{$S_1$};	
			\draw(4.3,3.7)[]node[below=2pt]{$S_2$};	
			\draw(13,3.7)[]node[below=2pt]{$S_i$};	
		\end{tikzpicture}
	\end{center}  
	\caption{An $i$-semi-$[2,k]$-T $T$ with $d_T(v_j)=k$ for all $j=1,2,\cdots,i$ }
	\label{f2-}
\end{figure}

By the definition of $i$-semi-$[2,k]$-T, the result below follows directly.

\begin{lemma}\label{le2.50}
	Let $i\ge 1$ and $T$ be an $i$-semi-$[2,k]$-T of $G$ 
	with $d_{T}(v_j)=k$
	for $j=1,2,\cdots,i$.
	Assume that 
	$S_1,\cdots,S_i$ are 
	disjoint subsets of $V(G)$ 
	with the properties that 
	for each $j=1,2,\cdots,i$,
	$G[S_j]$ has a $[2,k]$-ST
	and $S_j\cap V(T)=\{v_j\}$, as shown in Figure~\ref{f2-}.
	Then, $T$ can be extended 
	to a $[2,k]$-T $T'$ of $G$.
	 In particular, if 
	 $V(T)\cup \bigcup\limits_{1\le j\le i}S_j
	 =V(G)$, 
	 then $T'$ is a $[2,k]$-ST of $G$.
\end{lemma}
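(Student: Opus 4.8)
The plan is to build $T'$ by attaching to $T$, at each vertex $v_j$, a $[2,k]$-ST of $G[S_j]$, and then to verify the degree condition vertex by vertex. For each $j\in\{1,\dots,i\}$ fix a $[2,k]$-ST $T_j$ of $G[S_j]$, which exists by hypothesis, and let $T'$ be the graph with vertex set $V(T)\cup\bigcup_{j}S_j$ and edge set $E(T)\cup\bigcup_{j}E(T_j)$. Since $E(T)\subseteq E(G)$ and each $E(T_j)\subseteq E(G[S_j])\subseteq E(G)$, the graph $T'$ is a subgraph of $G$.

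First I would check that $T'$ is a tree. Set $T^{(0)}=T$ and $T^{(j)}=T^{(j-1)}\cup T_j$. Because the $S_j$ are pairwise disjoint and $S_j\cap V(T)=\{v_j\}$, we have $V(T^{(j-1)})\cap V(T_j)=S_j\cap\big(V(T)\cup S_1\cup\cdots\cup S_{j-1}\big)=\{v_j\}$, so at each step a tree is glued to a tree along a single common vertex; such a union is connected and has exactly one fewer edge than vertex (the two edge sets being disjoint, as no edge has both endpoints in the one-vertex intersection), hence is again a tree. Thus $T'=T^{(i)}$ is a tree with $V(T')=V(T)\cup\bigcup_jS_j$.

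The remaining content is the degree count. Partition $V(T')$ into three types. (a) If $x\in V(T)\setminus\{v_1,\dots,v_i\}$, then $d_{T'}(x)=d_T(x)$, and since $T$ is a $[2,k-1]$-T whose only vertices of degree $k$ are $v_1,\dots,v_i$, we get $d_{T'}(x)\in\{1\}\cup\{k+1,k+2,\dots\}$. (b) If $x\in S_j\setminus\{v_j\}$, then $d_{T'}(x)=d_{T_j}(x)\notin\{2,\dots,k\}$ because $T_j$ is a $[2,k]$-ST. (c) For $v_j$, the edges of $T'$ incident with $v_j$ are the $d_T(v_j)=k$ edges of $T$ at $v_j$ together with the $d_{T_j}(v_j)\ge 1$ edges of $T_j$ at $v_j$, and these two edge sets are disjoint, since a $T$-edge at $v_j$ has its other endpoint in $V(T)$ while a $T_j$-edge at $v_j$ has its other endpoint in $S_j\setminus\{v_j\}$; hence $d_{T'}(v_j)=k+d_{T_j}(v_j)\ge k+1$, using that $d_{T_j}(v_j)\notin\{2,\dots,k\}$ forces $d_{T_j}(v_j)=1$ or $d_{T_j}(v_j)\ge k+1$. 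In every case $d_{T'}$ avoids $\{2,\dots,k\}$, so $T'$ is a $[2,k]$-T; and if $V(T)\cup\bigcup_jS_j=V(G)$, then $V(T')=V(G)$ and $T'$ is a $[2,k]$-ST of $G$.

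The lemma is essentially an unfolding of the definitions, so there is no serious obstacle. The only point needing care is case (c): the vertex $v_j$ has degree exactly $k$ in $T$ and would thus be forbidden, but attaching $T_j$ contributes at least one extra edge at $v_j$ (here one uses $|S_j|\ge 2$, equivalently $d_{T_j}(v_j)\ge1$), which lifts its degree to at least $k+1$; one must also confirm that no new vertex of degree in $\{2,\dots,k\}$ is created, which is precisely what (a) and (b) record.
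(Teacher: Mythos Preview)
Your proof is correct and is precisely the natural elaboration of what the paper intends: the paper does not actually give a proof of this lemma but merely remarks that ``the result below follows directly'' from the definition of an $i$-semi-$[2,k]$-T. Your write-up supplies exactly the verification the paper omits (gluing the $T_j$'s to $T$ at the $v_j$'s and checking degrees case by case), and your flag that one implicitly needs $|S_j|\ge 2$ so that $d_{T_j}(v_j)\ge 1$ is a fair caveat, satisfied in every application the paper makes of the lemma.
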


The next result holds obviously
and it will be repeatedly applied in this article. 
It will be first applied in Lemma~\ref{le2.8}. 

\begin{lemma}\label{le2.7}
	Let $G$ be a graph with $X\subset V(G)$ and $z\in V(G)\backslash X$. If $G[X]$ is connected and $1\leq |N_X(z)|<|X|$, 
	then there exists an induced path $zxy$ in $G[X\cup\{z\}]$, where $x,y\in X$.
\end{lemma}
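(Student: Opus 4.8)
The plan is to reduce the statement to a purely structural fact about the partition of $X$ induced by adjacency with $z$. First I would set $A=N_X(z)$ and $B=X\setminus A$. The hypothesis $|N_X(z)|\ge 1$ guarantees $A\ne\emptyset$, and the hypothesis $|N_X(z)|<|X|$ guarantees $B\ne\emptyset$; checking that both parts are nonempty is the only thing one must be careful about, and it is supplied precisely by the two inequalities $1\le |N_X(z)|<|X|$ in the statement.

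Next, since $G[X]$ is connected and $\{A,B\}$ partitions $V(G[X])$ into two nonempty sets, some edge of $G[X]$ must join $A$ to $B$; otherwise $G[X]$ would be disconnected, contradicting the hypothesis. Fix such an edge $xy$ with $x\in A$ and $y\in B$. Then $zx\in E(G)$ because $x\in N_X(z)$, we have $xy\in E(G)$ by the choice of the edge, and $zy\notin E(G)$ because $y\notin N_X(z)$. Consequently the subgraph of $G[X\cup\{z\}]$ induced by $\{z,x,y\}$ consists of exactly the edges $zx$ and $xy$, i.e.\ it is the induced path $zxy$, which is what is required.

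I do not expect any genuine obstacle here: the argument is elementary, and the only subtle point, as noted, is to confirm that neither side of the partition $\{A,B\}$ is empty before invoking the connectivity of $G[X]$. An essentially equivalent alternative route would be to pick any $y\in X$ with $zy\notin E(G)$ and any $x\in N_X(z)$, take a shortest path in $G[X]$ from $x$ to $y$, and let $xy$ be the last edge along this path whose endpoint nearer to $x$ still lies in $N_X(z)$; this produces the same conclusion but is less direct than the partition argument above.
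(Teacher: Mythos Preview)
Your argument is correct and complete: partitioning $X$ into $A=N_X(z)$ and $B=X\setminus A$, noting both are nonempty by the hypotheses, and invoking connectivity of $G[X]$ to find a crossing edge is exactly the right move. The paper itself does not give a proof of this lemma at all; it simply declares that the result ``holds obviously'' and moves on, so there is nothing further to compare against.
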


In the following, assume that 
$G$ is a connected graph 
of order $n$ 
with 
$NC(G)\geq\frac{n-2}{2}$,
$u$ is a  vertex in $G$ with 
$d(u)=\delta(G)$
and $N(u)=\{u_i:i\in[\delta(G)]\}$ and $W=V(G)\backslash N[u]$.

\begin{lemma} \label{le2.8}
	Assume that $n\ge n_1(k)$
	and $G[W]$ is connected.
	For any  $S\subseteq W$ with $2\leq|S|<\frac{n-4k+10}{4}-\delta(G)$, 
	if $T$ is a $1$-semi-$[2,k]$-T
	with $V(T)=N[u]\cup S$
	and $v$ is a vertex in $S$ 
	with $d_T(v)=k$, 
	 as shown in Figure~\ref{f1}, 
	then 
	$G$ has a $[2,k]$-ST.
\end{lemma}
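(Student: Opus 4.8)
The idea is to use the $1$-semi-$[2,k]$-T $T$ with vertex set $N[u]\cup S$ as a scaffold, attach all remaining vertices $W\setminus S$ to it, and arrange that the only ``bad'' vertex $v$ (the unique vertex of degree $k$) ends up with degree $\ge k+1$ or becomes a leaf, so that the final spanning tree has no vertex of degree in $\{2,\dots,k\}$. First I would consider the graph $G[W\setminus S]$. Since $G[W]$ is connected and $|S|<\frac{n-4k+10}{4}-\delta(G)<\frac{n+2}{4}-\delta(G)$, Lemma~\ref{le2.3} applies and $G[W\setminus S]$ has at most two components; I would treat the cases of one component and two components separately.

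In the one-component case, write $C=G[W\setminus S]$. The key point is that $v$ has many neighbors in $W$ by \eqref{e3}, so $N_C(v)\ne\emptyset$ unless $N_W(v)\subseteq S$, which is impossible for large $n$ since $|S|$ is far smaller than $\frac{n-2}{2}-\delta(G)\le d_W(v)$; hence I may pick $vx$ with $x\in C$. Now Lemma~\ref{le2.4}(ii), applied with this $S$ and with $S'=\{x\}$ (noting $|S|+\delta(G)<\frac{n-4k+10}{4}-\delta(G)+\delta(G)$ is comfortably below $\frac{n-4k+6}{4}$ after using $|S|\ge 2$ — I would check this inequality precisely), gives that $C-x$ has a $[2,k]$-ST, and separately $C$ itself has a $[2,k]$-ST; in either case I can build a $[2,k]$-ST $T_C$ of $C$ in which $x$ is a leaf. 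Deleting the leaf-edge at $x$ from $T_C$ would disconnect it, but instead I graft: take $T$, add the edge $vx$, and attach $T_C$ at $x$. In $T\cup\{vx\}\cup T_C$ the vertex $v$ has degree $k+1$ (it had degree $k$ in $T$), $x$ has degree $\ge 3$ (degree $1$ in $T_C$ plus the new edge $vx$ plus... wait — if $x$ is a leaf of $T_C$ its degree becomes $2$). To avoid this I instead require $d_{T_C}(x)\ge k$, i.e. $x$ is a high-degree vertex of the $[2,k]$-ST $T_C$; since $C$ has a $[2,k]$-ST at all, it has a vertex of degree $\ge k+1\ge 3$, and by relabelling or by choosing $x$ among the neighbours of $v$ in $C$ to be (or to be adjacent to) such a vertex, I can ensure $x$ has degree $\ge k$ in the tree rooted appropriately; then adding $vx$ gives $x$ degree $\ge k+1$. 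Every other vertex keeps its degree, so the union is a $[2,k]$-ST of $G$ with vertex set $N[u]\cup S\cup(W\setminus S)=V(G)$.

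In the two-component case, let $C_1,C_2$ be the components of $G[W\setminus S]$. By Lemma~\ref{le2.4}(i) each $|C_j|\ge \frac n2-\delta(G)-|S|$, which is large, so as above $v$ (or some vertex of $S$) has a neighbour in each $C_j$; more carefully, since $G$ is connected, $S$ separates $C_1$ from $C_2$ and from $N[u]$, so there are edges from $S$ to each $C_j$ — the subtlety is whether these can all be routed through $v$, and if not, how attaching a component at a vertex $s\in S\setminus\{v\}$ of degree $<k$ in $T$ affects $s$. The cleanest route is: attach $C_1$ at a vertex $x_1$ with $vx_1\in E(G)$, $x_1\in C_1$, using a $[2,k]$-ST $T_{C_1}$ of $C_1$ in which $x_1$ has degree $\ge k$ (possible by Lemma~\ref{le2.4}(ii) with $S'=\emptyset$ and the degree-boost argument above), and symmetrically attach $C_2$ at a vertex $x_2$ adjacent to $v$. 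This adds $2$ to $d_T(v)$, making it $k+2\ge k+1$, while $x_1,x_2$ get degree $\ge k+1$ and nothing else changes; the result spans $V(G)$ and is a $[2,k]$-ST. If $v$ fails to have a neighbour in some $C_j$, I would instead attach $C_j$ through an edge from $C_j$ to $S\setminus\{v\}$, picking the endpoint in $S$ to be a vertex that is already a leaf of $T$, or choosing $T_{C_j}$ so its attachment vertex has degree $k-d_T(s)+(k\text{-}\mathrm{gap})$ — this bookkeeping is where I expect the real work to be.

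**Main obstacle.** The genuine difficulty is not the counting (the inequalities all follow from $n\ge n_1(k)$ and $|S|+\delta(G)$ being small, via \eqref{e3}, Lemma~\ref{le2.3}, and Lemma~\ref{le2.4}) but the \emph{degree bookkeeping at the attachment vertices}: when I splice a $[2,k]$-ST of a component onto $T$ at a vertex $x$, I must guarantee that the new degree of $x$ lands outside $\{2,\dots,k\}$, and simultaneously that the degree of any vertex of $S$ used as a gateway does too. This forces me to control \emph{which} vertex of each component I attach at, and to use the freedom in Theorem~\ref{Thm1.4}/Lemma~\ref{le2.4}(ii) that a component of large minimum degree has a $[2,k]$-ST with a prescribed vertex of large degree — I would isolate this as a small auxiliary observation (any graph with a $[2,k]$-ST, together with a distinguished vertex adjacent to a degree-$\ge k{+}1$ vertex of that tree, admits a $[2,k]$-ST in which the distinguished vertex itself has degree $\ge k{+}1$, by re-rooting and a local exchange) and invoke it at each grafting step.
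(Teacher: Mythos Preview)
Your overall architecture (split on the number of components of the ``remainder'' graph, invoke Lemmas~\ref{le2.3} and~\ref{le2.4}, then graft) matches the paper's, but there is a genuine gap at exactly the point you flagged as the main obstacle, and the paper resolves it by a device you did not find.

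\textbf{The missing idea.} You remove all of $S$ and then try to reconnect the component(s) of $G[W\setminus S]$ to $T$ through an edge $vx$. This forces you to control the degree of $x$ inside the component's $[2,k]$-ST, and your ``auxiliary observation'' (that one can re-root a $[2,k]$-ST to make a prescribed vertex have degree $\ge k{+}1$) is neither proved nor a consequence of Theorem~\ref{Thm1.4} or Lemma~\ref{le2.4}(ii); those results give existence only, with no control over which vertex is a hub. The paper sidesteps this entirely by removing $S':=S\setminus\{v\}$ instead of $S$. Then $v$ itself lies in $W\setminus S'$, and in Case~1 (one component) one simply takes a $[2,k]$-ST $T^*$ of $G[W\setminus S']$ and forms $T\cup T^*$; the two trees share exactly the vertex $v$, so $d_{T\cup T^*}(v)=k+d_{T^*}(v)\ge k+1$ automatically, since $d_{T^*}(v)\in\{1\}\cup[k{+}1,\infty)$. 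No attachment edge, no degree bookkeeping at a foreign vertex. This is precisely the content of Lemma~\ref{le2.50}.

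\textbf{Where your two-component plan breaks further.} When $G[W\setminus S']$ has two components, $v$ sits inside one of them (say $C_1$), so that side is again handled for free by Lemma~\ref{le2.50}. For the other component $C_2$, the paper does \emph{not} look for an edge from $v$; it takes some $x\in S'$ with a neighbour in $C_2$ (which exists because $G[W]$ is connected) and then, using Lemma~\ref{le2.7}, finds an induced path $xx_1x_2$ with $x_1,x_2\in C_2$. It then adds exactly $k$ new edges at $x$ (to $S_1\cup S_3\cup\{x_1\}$ with $|S_1|+|S_3|=k-1$), so that $d_{T''}(x)=d_T(x)+k\in\{k+1\}\cup[2k+1,\infty)$ regardless of whether $x$ was a leaf or a hub in $T$; and it gives $x_1$ exactly $k$ edges so that $x_1$ becomes a second degree-$k$ vertex. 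The resulting $2$-semi-$[2,k]$-T has its two bad vertices $v\in C_1-S_3$ and $x_1\in C_2-(S_1\cup S_2\cup\{x_2\})$, and Lemma~\ref{le2.50} finishes. Your plan of attaching $C_j$ at a leaf $s\in S\setminus\{v\}$ via a single edge would make $d(s)=2$, and your proposed fix (``choosing $T_{C_j}$ so its attachment vertex has degree $k-d_T(s)+\dots$'') is again degree control on a specific vertex of a $[2,k]$-ST, which you have no tool for.

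In short: replace $S$ by $S'=S\setminus\{v\}$ so that $v$ is the shared vertex with the component's $[2,k]$-ST, and, for the far component, build up a gateway vertex of $S'$ to degree exactly $d_T(x)+k$ by hand using an induced path from Lemma~\ref{le2.7}. That is the substance your proposal is missing.
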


\begin{figure}[H]
	\begin{center}  
		\begin{tikzpicture}[scale=0.6]
			\tikzstyle{every node}=[font=\normalsize,scale=0.9]
			\filldraw (4.5,10)circle(0.7ex);
			\draw (7.1,9.8)[]node[above=2pt]{$u$~~~$(d_{T}(u)\ge k+1)$};			
			\draw(3,6.3)[]node[right=0pt]{$v$};	
			\draw(2.1,8.2)[]node[left=0pt]{$u'$};	
			\draw(1.1,8.2)[]node[left=0pt]{$(d_{T}(u')\ge k+1)$};	
			
			\filldraw (2,8)circle(0.7ex);
			\filldraw (3.4,8)circle(0.7ex);
			\filldraw (4.8,8)circle(0.7ex);
			\filldraw (7,8)circle(0.7ex);
			\filldraw (5.5,8)circle(0.35ex);
			\filldraw (5.9,8)circle(0.35ex);
			\filldraw (6.3,8)circle(0.35ex);%
			 			
			\filldraw(0.9,6.3)circle(0.7ex);
			\filldraw(1.5,6.3)circle(0.7ex);
			\filldraw(3,6.3)circle(0.7ex);
			
			\filldraw(2,6.3)circle(0.35ex);
			\filldraw(2.3,6.3)circle(0.35ex);
			\filldraw(2.6,6.3)circle(0.35ex);
			
			\filldraw(2,4.8)circle(0.7ex);
			\filldraw(2.6,4.8)circle(0.7ex);
			\filldraw(4,4.8)circle(0.7ex);
			
			\filldraw(3,4.8)circle(0.35ex);
			\filldraw(3.3,4.8)circle(0.35ex);
			\filldraw(3.6,4.8)circle(0.35ex);
			
			\node [] at (3,4.1) {$\underbrace{~~~~~~~~~~~}_{k-1}$};
			
			\filldraw(4.8,5.5)circle(0.7ex);
			\filldraw(5.7,5.5)circle(0.7ex);
			\filldraw(7.5,5.5)circle(0.7ex);
			\filldraw(7,5.5)circle(0.35ex);
			\filldraw(6.65,5.5)circle(0.35ex);
			\filldraw(6.3,5.5)circle(0.35ex);
			\draw[very thick] (4.5,10)--(2,8);		
			\draw[very thick] (4.5,10)--(3.4,8);		
			\draw[very thick] (4.5,10)--(4.8,8);		
			\draw[very thick] (4.5,10)--(7,8);	
			\draw[very thick] (2,8)--(0.9,6.3);
			\draw[very thick] (2,8)--(1.5,6.3);
			\draw[very thick] (2,8)--(3,6.3);
			\draw[very thick] (3,6.3)--(2,4.8);		
			\draw[very thick] (3,6.3)--(2.6,4.8);	
			\draw[very thick] (3,6.3)--(4,4.8);			
			\draw[thick, dashed](8.2,7)rectangle(0,3.3);	 				
			\draw[thick, dashed](4.4,6.7)rectangle(0.4,3.6);	
			\draw (1,5)[] node[right=0pt]{$S$};			
			\draw (8.5,5)[] node[right=0pt]{$W$};
		\end{tikzpicture}
	\end{center}	
	\caption{A 1-semi-$[2,k]$-T $T$ of $G$ with vertex set $N[u]\cup S$}
	\label{f1}
\end{figure}
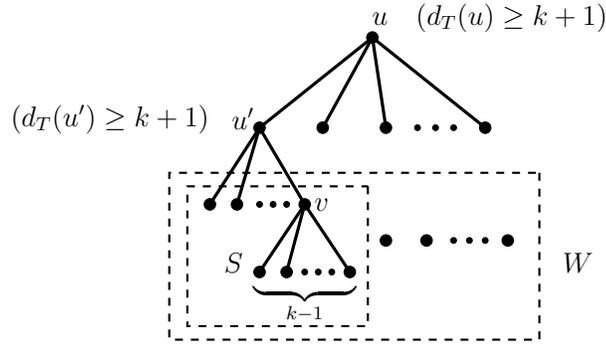 

\begin{proof} 
	Assume that $T$ is a $1$-semi-$[2,k]$-T of $G$ with $V(T)=N[u]\cup S$ and $d_{T}(v)=k$, where $v\in S$. 
	Let $S':=S\backslash\{v\}$. Then 
	$|S'|+\delta(G)
	<\frac{n-4k+6}{4}$.
	
	\incase $G[W\backslash S']$ is connected.
	
	By Lemma~\ref{le2.50},
	it suffices to show that 
	$G[W\setminus S']$
	contains a $[2,k]$-ST. By (\ref{e3}), we have 
	\eqn{e16}
	{
		\delta(G[W\backslash S'])
		&\geq&\frac{n-2}{2}-\delta(G)-|S'|>\frac{n-2}{2}-\frac{n-4k+6}{4}=\frac{n+4k-10}{4}\nonumber\\
		&>&\frac{n-12k+14}{4}\ge c_k\sqrt{n}
		\ge c_k\sqrt{|W\backslash S'|},
	}
	where the second last inequality follows from the condition that $n\ge n_1(k)$.
	Then, by Theorem~\ref{Thm1.4}, $G[W\backslash S']$ has a $[2,k]$-ST. Note that $V(T)\cap (W\backslash S')=\{v\}$ and $V(T)\cup (W\backslash S')=V(G)$. By Lemma~\ref{le2.50}, $G$ has a $[2,k]$-ST. 
	
	\vspace{3mm}
	\noindent {\bf Case 2}: $G[W\backslash S']$ is disconnected.
	\vspace{3mm}
	
	Since $|S'|+\delta(G)<\frac{n-4k+6}4$,
	by Lemma~\ref{le2.3}, 
	$G[W\backslash S']$ contains exactly two components,
	say $C_1$ and $C_2$, as shown in Figure~\ref{f2} (a). 
	Then, 
	\eqn{e17}
	{
	{k<\frac{n+4k-6}{4}}<\frac{n}{2}-\delta(G)-|S'|\leq|C_i|\le\frac{n-2}{2},
	}
where the last two inequalities 
are from Lemma~\ref{le2.4} (i).
	Note that $n\ge n_1(k)$, then Claim 1 below follows directly from Lemma~\ref{le2.4} (ii). 
	
	\setcounter{countclaim}{0}
	
	\inclaim 
	For any
	$i\in [2]$ and $S_0\subseteq C_i$
	with $|S_0|\le 2k-2$, $C_i-S_0$ contains a $[2,k]$-ST.  
	\vspace{3mm}
	
	Clearly, $v\in W\setminus S'=C_1\cup C_2$. 
	Assume that $v\in C_1$. 
	Since $G[W]$ is connected and $|S'|\geq1$, there exists some $x\in S'$ with $|N_{C_2}(x)|\geq1$ as shown in Figure~\ref{f2} (b) for $k=3$. 		
	
	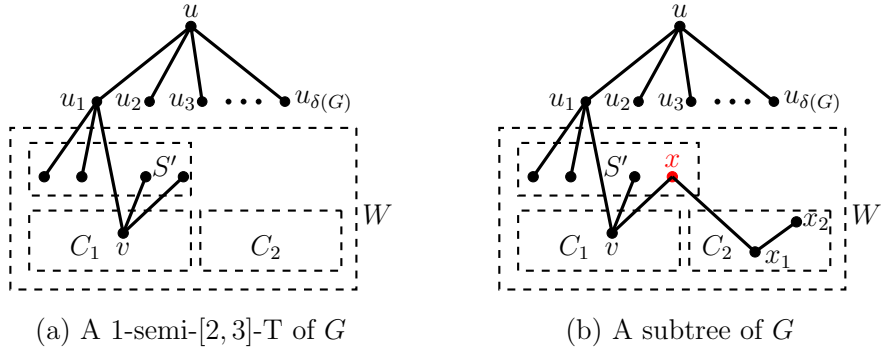
\begin{figure}[H]
		\begin{center}  
			\begin{tikzpicture}[scale=0.5]
				\tikzstyle{every node}=[font=\normalsize,scale=0.9]
				\filldraw (4.5,10)circle(0.75ex);
				\draw (4.5,10)[]node[above=0pt]{$u$};	
				\draw (2,8)[]node[left=0pt]{$u_1$};
				\draw (3.6,8)[]node[left=2pt]{$u_2$};
				\draw (4.9,8)[]node[left=0pt]{$u_3$};
				\draw (7,8)[]node[right=0pt]{$u_{\delta(G)}$};
				
				\draw (2.7,4.5)[]node[below=0pt]{$v$};

				\filldraw (2,8)circle(0.7ex);
				\filldraw (3.4,8)circle(0.7ex);
				\filldraw (4.8,8)circle(0.7ex);
				\filldraw (7,8)circle(0.7ex);
				
				\filldraw (5.5,8)circle(0.35ex);
				\filldraw (5.9,8)circle(0.35ex);
				\filldraw (6.3,8)circle(0.35ex);

				\filldraw(0.6,6)circle(0.75ex);
				\filldraw(1.6,6)circle(0.75ex);
				
				\filldraw(3.3,6)circle(0.75ex);
				\filldraw(4.3,6)circle(0.75ex);
				
				\filldraw(2.7,4.5)circle(0.75ex);

				\draw[very thick] (4.5,10)--(2,8);		
				\draw[very thick] (4.5,10)--(3.4,8);
				\draw[very thick] (4.5,10)--(4.8,8);
				\draw[very thick] (4.5,10)--(7,8);

				\draw[very thick] (2,8)--(0.6,6);			
				\draw[very thick] (2,8)--(1.6,6);		
				\draw[very thick]   (2,8)--(2.7,4.5);
				
				\draw[very thick]   (3.3,6)--(2.7,4.5);
				\draw[very thick]   (4.3,6)--(2.7,4.5);

				\draw[thick, dashed](8.9,7.3)rectangle(-0.3,3);	 				
				\draw[thick, dashed](4.5,5.1)rectangle(0.2,3.5);	   	
				\draw[thick, dashed](8.5,5.1)rectangle(4.75,3.5);		
				\draw[thick, dashed](4.5,5.5)rectangle(0.2,6.9);		
				\draw (1.7,4.7)[] node[below=0pt]{$C_1$};			
				\draw (6.5,4.7)[] node[below=0pt]{$C_2$};	
				\draw (3.2,6.3)[] node[right=0pt]{$S'$};			
				\draw (8.8,5)[] node[right=0pt]{$W$};
				\draw (4.5,2.5) node[below=0pt]{(a)~A 1-semi-$[2,3]$-T of $G$};	
				\filldraw (17.5,10)circle(0.75ex);
				\draw (17.5,10)[]node[above=0pt]{$u$};	
				\draw (15,8)[]node[left=0pt]{$u_1$};
				\draw (16.6,8)[]node[left=2pt]{$u_2$};
				\draw (17.9,8)[]node[left=0pt]{$u_3$};
				\draw (20,8)[]node[right=0pt]{$u_{\delta(G)}$};
				\draw (15.7,4.5)[]node[below=0pt]{$v$};
				\draw (19.5,3.8)[]node[right=0pt]{$x_1$};
				\draw (20.5,4.8)[]node[right=0pt]{$x_2$};
				
				\filldraw (15,8)circle(0.7ex);
				\filldraw (16.4,8)circle(0.7ex);
				\filldraw (17.8,8)circle(0.7ex);
				\filldraw (20,8)circle(0.7ex);
				\filldraw (18.5,8)circle(0.35ex);
				\filldraw (18.9,8)circle(0.35ex);
				\filldraw (19.3,8)circle(0.35ex);
				\filldraw(16.3,6)circle(0.75ex);
				\filldraw[red](17.3,6)circle(0.75ex);
				\filldraw(13.6,6)circle(0.75ex);
				\filldraw(14.6,6)circle(0.75ex);
				\filldraw(15.7,4.5)circle(0.75ex);
				\filldraw(19.5,4)circle(0.75ex);
				\filldraw(20.6,4.8)circle(0.75ex);
				
				\draw[very thick] (17.5,10)--(15,8);		
				\draw[very thick] (17.5,10)--(16.4,8);
				\draw[very thick] (17.5,10)--(17.8,8);
				\draw[very thick] (17.5,10)--(20,8);		 				
				\draw[very thick] (15,8)--(13.6,6);			
				\draw[very thick] (15,8)--(14.6,6);		
				\draw[very thick] (15,8)--(15.7,4.5);
				\draw[very thick] (16.3,6)--(15.7,4.5);
				\draw[very thick] (17.3,6)--(15.7,4.5);
				\draw[very thick] (17.3,6)--(19.5,4);
				\draw[very thick] (20.6,4.8)--(19.5,4);
				\draw[thick,dashed](21.9,7.3)rectangle(12.7,3);	 				
				\draw[thick,dashed](17.5,5.1)rectangle(13.2,3.5);	   	
				\draw[thick,dashed](21.5,5.1)rectangle(17.75,3.5);		
				\draw[thick,dashed](18,5.5)rectangle(13.2,6.9);		
				\draw (14.7,4.7)[] node[below=0pt]{$C_1$};			
				\draw (18.5,4.7)[] node[below=0pt]{$C_2$};	
				\draw (15.2,6.3)[] node[right=0pt]{$S'$};	
				\draw (21.8,5)[] node[right=0pt]{$W$};
				\draw (17.3,6)[red] node[above=0pt]{$x$};
				\draw (17.5,2.5) node[below=0pt]{(b)~A subtree of $G$};	
			\end{tikzpicture}
		\end{center}	
		\caption{Two subtrees of $G$ when $k=3$}
		\label{f2}
	\end{figure} 
	
	\vspace{2mm}
	\noindent {\bf Subcase 2.1}: $|N_{C_2}(x)|=|C_2|$.
	\vspace{2mm}
	
	By (\ref{e17}), we have $|N_{C_2}(x)|=|C_2|>k$. Then
	$T$ can be extended to a $1$-semi-$[2,k]$-T $T'$ with edge set $E(T)\cup E(x,C_2)$ and $d_{T'}(v)=k$. 
	Note that $C_1\cap V(T')=\{v\}$ and $C_1\cup V(T')=V(G)$.
	By Claim 1, $C_1$ has a $[2,k]$-ST $T_1$. Then, by Lemma~\ref{le2.50}, $E(T')\cup E(T_1)$ induces 
	a $[2,k]$-ST of $G$.
	
	\vspace{3mm}
	\noindent {\bf Subcase 2.2}: $1\le|N_{C_2}(x)|<|C_2|$.
	\vspace{3mm}
	
	By Lemma~\ref{le2.7}, there exist $x_1,x_2\in C_2$ such that $xx_1,x_1x_2\in E(G)$ and $xx_2\notin E(G)$, as shown in Figure~\ref{f2} (b).

\inclaim $d_{C_2}(x_1)>2k$
and $d_{C_1}(x)+d_{C_2}(x)>k$.

	
	By $(\ref{e3})$, we have
	\eqn{e18}
		{
		d_{C_2}(x_1)&=&d_W(x_1)-d_{S'}(x_1)\ge\frac{n-2}{2}-\delta(G)-|S'|\nonumber\\
		&>&\frac{n-2}{2}-\frac{n-4k+6}{4}=\frac{n+4k-10}4>2k
		}
		and 
		\eqn{e19}
		{d_{C_1}(x)+d_{C_2}(x)&=&d_W(x)-d_{S'}(x)
			\ge 
			\left(\frac{n-2}{2}
			-\delta(G)\right)
			-\left(|S'|-1
			\right)
			\nonumber \\
			&>&
			\frac{n-2}2-\delta(G)-|S'|+1
			>\frac{n+4k-6}{4}>k.}
	Thus, Claim 2 holds.
		
	\vspace{2mm}	
	Assume that $|N_{C_2}(x)|=a$. Let $S_1\subseteq N_{C_2}(x)\setminus\{x_1\}$ with $|S_1|=\min\{a-1,k-1\}$. Then $|S_1|\le k-1$.
	By (\ref{e18}), $d_{C_2}(x_1)>2k$, and so 
	$|N_{C_2}(x_1)\setminus(S_1\cup\{x_2\})|\ge k-2$. 
	Let $S_2\subseteq N_{C_2}(x_1)\setminus(S_1\cup\{x_2\})$ with $|S_2|=k-2$. Then 
	$$
	|S_1\cup S_2\cup\{x_2\}|\le k-1+k-2+1=2k-2.
	$$

	By (\ref{e19}), $d_{C_1}(x)>k-a$, and so there are at least $k-a$ vertices in $N_{C_1-v}(x)$. Let $S_3\subseteq N_{C_1-v}(x)$ with $|S_3|
	=k-1-|S_1|\le k-a$. 
Now we extend $T$ to a 
	2-semi-$[2,k]$-T $T''$ with 
	vertex set 
	$$
	V(T)\cup S_1\cup S_2\cup S_3\cup\{x_1,x_2\}
	$$ 
	and edge set 
	$$
	E(T)\cup E(x,S_1\cup S_3\cup\{x_1\})\cup E(x_1,S_2\cup\{x_2\}),
	$$
	and $v$ and $x_1$ 
	are the only vertices 
	in $T''$ of degree $k$ in $T''$.

	By Claim 1, $C_1-S_3$ has a $[2,k]$-ST, and $C_2-S_1\cup S_2\cup\{x_2\}$ has a $[2,k]$-ST. Since $V(T'')\cap (C_1-S_3)=\{v\}$, $V(T'')\cap (C_2-S_1\cup S_2\cup\{x_2\})=\{x_1\}$ and $V(T'')\cup(C_1-S_3)\cup(C_2-S_1\cup S_2\cup\{x_2\})=V(G)$. Then by Lemma~\ref{le2.50}, $G$ has a $[2,k]$-ST.
 \end{proof} 

\begin{definition}
A subtree $T$ of $G$ is called an \textit{i}-quasi-$[2,k]$-T if it contains exactly $i$ vertices whose degrees are between $2$ and $k$. Specifically, for a 1-quasi-$[2,k]$-T, denoted by $T_w$, the vertex $w$ is the only one satisfying this degree condition.
\end{definition}

\begin{lemma}\label{le2.9}
	Assume that $n\geq n_1(k)$,  $\delta(G)<\frac{n-12k+14}{4}$,  and 
	$G[W]$ contains exactly two components.
	For any component $C$ 
	of $G[W]$
	and $v\in N(u)$, 
	\begin{enumerate}
		\item[(i)] if $d_{C}(v)\in\{1,|C|\}$, then $C+v$ has a $[2,k]$-ST $T$ with $d_{T}(v)=d_{C}(v)$, and
		
		\item[(ii)] if $2\le d_{C}(v)<|C|$,
	 then $C+v$ has a $1$-quasi-$[2,k]$-ST 
	 $T_{v}$ 
	 with $d_{T_v}(v)=\min\{k,d_{C}(v)\}$.
	\end{enumerate} 
\end{lemma}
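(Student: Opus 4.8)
\quad The plan is to construct the desired tree explicitly in each case, building it from a small core and completing it with a $[2,k]$-ST supplied by Lemma~\ref{le2.4}(ii). First I record the facts that will be used throughout. Since $G[W]$ has exactly two components, Lemma~\ref{le2.4} applied with $S=\emptyset$ shows that every component $C$ of $G[W]$ satisfies $\frac{n}{2}-\delta(G)\le|C|\le\frac{n-2}{2}$; since in addition $\delta(G)<\frac{n-12k+14}{4}\le\frac{n-4k+6}{4}$ and $n\ge n_1(k)$, the same lemma gives that $C-S'$ has a $[2,k]$-ST (hence is connected) for every $S'\subseteq V(C)$ with $|S'|\le 2k-2$. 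Finally, as $C$ is a component of $G[W]$, each $w\in V(C)$ satisfies $d_C(w)=d_W(w)\ge\frac{n-2}{2}-\delta(G)$ by~(\ref{e3}), and since $n\ge n_1(k)$ this quantity exceeds $2k$; in particular $|C|>2k$.

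For part~(i), if $d_C(v)=|C|$ then, since every spanning tree of $C+v$ has exactly $|C|$ edges, the requirement $d_T(v)=|C|$ forces $T$ to be the star centred at $v$, which is a $[2,k]$-ST because $|C|>k$. If $d_C(v)=1$, let $c$ be the unique neighbour of $v$ in $C$. I would choose $k$ distinct neighbours $a_1,\dots,a_k$ of $c$ in $C$ (possible since $d_C(c)>2k$), take a $[2,k]$-ST $T^-$ of $C-\{a_1,\dots,a_k\}$ (available because $k\le 2k-2$), and let $T$ be $T^-$ together with the edges $ca_1,\dots,ca_k$ and $vc$. Then $T$ is a spanning tree of $C+v$ in which $v$ and each $a_i$ are leaves, $d_T(c)=d_{T^-}(c)+k+1\ge k+2$, and all other degrees are inherited from $T^-$; hence $T$ is a $[2,k]$-ST with $d_T(v)=1=d_C(v)$.

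For part~(ii), put $a:=d_C(v)$ and $b:=\min\{k,a\}$, so $2\le b\le k$ and $b\le a$, and fix distinct neighbours $x_1,\dots,x_b$ of $v$ in $C$. The idea is to join $v$ to $x_1,\dots,x_b$, keep $x_2,\dots,x_b$ as leaves, and make $x_1$ a vertex of degree $\ge k$ inside $C':=C-\{x_2,\dots,x_b\}$. Concretely, I would pick $k-1$ distinct neighbours $a_1,\dots,a_{k-1}$ of $x_1$ in $C$ lying outside $\{x_2,\dots,x_b\}$ (possible since $d_C(x_1)>2k$); as $|\{x_2,\dots,x_b\}\cup\{a_1,\dots,a_{k-1}\}|=(b-1)+(k-1)\le 2k-2$, Lemma~\ref{le2.4}(ii) supplies a $[2,k]$-ST $T''$ of $C-(\{x_2,\dots,x_b\}\cup\{a_1,\dots,a_{k-1}\})$, which contains $x_1$. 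Adjoining the edges $x_1a_1,\dots,x_1a_{k-1}$ to $T''$ yields a spanning tree $T'$ of $C'$ with $d_{T'}(x_1)=d_{T''}(x_1)+(k-1)\ge k$, and I then let $T_v$ be $T'$ together with the edges $vx_1,\dots,vx_b$. Here $d_{T_v}(v)=b$; the vertices $x_2,\dots,x_b$ and $a_1,\dots,a_{k-1}$ are leaves; $d_{T_v}(x_1)=d_{T'}(x_1)+1\ge k+1$; and every other vertex keeps its degree from $T''$, which lies outside $\{2,\dots,k\}$. Therefore $v$ is the only vertex of $T_v$ with degree in $\{2,\dots,k\}$, so $T_v$ is a $1$-quasi-$[2,k]$-ST of $C+v$ with $d_{T_v}(v)=\min\{k,d_C(v)\}$.

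I expect the main obstacle to be the cardinality bookkeeping in part~(ii): before invoking Lemma~\ref{le2.4}(ii) one must delete from $C$ both the prospective leaves $x_2,\dots,x_b$ and $k-1$ neighbours of the hub $x_1$, and the total $(b-1)+(k-1)$ must stay at most $2k-2$. This forces one to delete only $k-1$ hub-neighbours rather than $k$ (the hub already has degree at least $1$ in $T''$, so $k-1$ extra pendant edges bring it up to $\ge k$), and it is exactly where the bound $b\le k$ is used. Checking that the graphs constructed are genuinely spanning trees and reading off their degrees is routine.
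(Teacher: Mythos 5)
Your proof is correct and follows essentially the same strategy as the paper: delete at most $2k-2$ carefully chosen vertices from $C$, invoke Lemma~\ref{le2.4}(ii) (with $S=\emptyset$, using $\delta(G)<\frac{n-12k+14}{4}\le\frac{n-4k+6}{4}$) to obtain a $[2,k]$-ST of the remainder, and reattach the deleted vertices as leaves so that the hub vertex ($x_1$, resp.\ $c$) ends with degree at least $k+1$ while $v$ receives the prescribed degree $d_C(v)$ or $\min\{k,d_C(v)\}$. The only cosmetic difference is that the paper runs both cases through the induced path $vx_1x_2$ provided by Lemma~\ref{le2.7} (using $vx_2\notin E(G)$ to keep the reattached sets disjoint), whereas you enforce that disjointness directly by choosing $a_1,\dots,a_{k-1}$ outside $\{x_2,\dots,x_b\}$ and handle $d_C(v)=1$ by a separate, equally valid construction.
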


\begin{proof}
	Since $\delta(G)<\frac{n-12k+14}4$,
	by Lemma~\ref{le2.4} (i), we have 
	\eqn{e17*}{
	|C|\ge\frac{n}2-\delta(G)>\frac{n+12k-14}4>3k.
    }
    
	If $d_{C}(v)=|C|$, then $C+v$ has a $[2,k]$-ST $T$ with edge set $E(v,C)$ and $d_{T}(v)=d_{C}(v)>3k$. So, in the following, we assume that $1\le d_{C}(v)<|C|$.
	Then by Lemma~\ref{le2.7}, there exist $x_1,x_2\in C$ such that $vx_1,x_1x_2\in E(G)$ and $vx_2\notin E(G)$.
	
	
	\setcounter{countclaim}{0}
	
	Now we are going to apply Lemma~\ref{le2.4} (ii) to 
	prove Claim 1 below.
	
	\inclaim There exist $S_1\subseteq N_C(v)\setminus \{x_1\}$ 
	with $|S_1|=\min\{k-1,d_C(v)-1\}$
	and 
	$S_2 \subseteq N_{C}(x_1)\setminus(S_1\cup\{x_2\})$ with $|S_2|=k-2$
	such that 
	$C-(S_1\cup S_2\cup\{x_2\})$ has a $[2,k]$-ST $T'$.

	\vspace{3mm}
	
	For each $x\in C$, by $(\ref{e3})$,
	\equ{e20}
	{
		d_{C}(x)\ge\frac{n-2}{2}-\delta(G)>\frac{n+12k-18}{4}>3k.
	}
	
	Since $1\le d_{C}(v)<|C|$, $|N_{C}(v)\setminus\{x_1\}|\ge\min\{k-1,d_{C}(v)-1\}$. Let $S_1 \subseteq N_{C}(v)\setminus\{x_1\}$ with $|S_1|=\min\{k-1,d_{C}(v)-1\}$. Then $|S_1|\le k-1$. Clearly, if $d_{C}(v)=1$, then $S_1=\emptyset$. By (\ref{e20}), we have $d_{C}(x_1)>3k$, and so $|N_{C}(x_1)\setminus(S_1\cup\{x_2\})|\ge k-2$. Let $S_2 \subseteq N_{C}(x_1)\setminus(S_1\cup\{x_2\})$ with $|S_2|=k-2$. Then 
	$$
	|S_1\cup S_2\cup\{x_2\}|\le k-1+k-2+1=2k-2.
	$$
	By Lemma~\ref{le2.4} (ii),
	$C-S_1\cup S_2\cup\{x_2\}$ has a $[2,k]$-ST $T'$,
	and thus Claim 1 holds.
	
	If $d_{C}(v)=1$, then $C+v$ has a $[2,k]$-ST $T''$ with edge set $E(T')\cup E(x_1,S_2\cup\{v,x_2\})$ and $d_{T''}(v)=d_{C}(v)$; and if $2\le d_{C}(v)<|C|$, then  $C+v$ has a 1-quasi-$[2,k]$-ST $T_{v}$ with edge set $E(T')\cup E(v,S_1\cup\{x_1\})\cup E(x_1,S_2\cup\{x_2\})$ and $d_{T_{v}}(v)=|S_1|+1=\min\{k-1,d_{C}(v)-1\}+1=\min\{k,d_{C}(v)\}$. 
	\end{proof}

\section{Proof of Theorem~\ref{Thm1.6}}

Let $k\ge2$ be an integer, and let $G$ be a connected graph 
of order $n\ge n_1(k)$.
By Theorem~\ref{Thm1.4}, if $\delta(G)\ge c_k\sqrt{n}$, 
then $G$ has a $[2,k]$-ST. Thus, in order to complete the proof of Theorem~\ref{Thm1.6}, it suffices to verify the following statement.

\begin{prop}\label{prop3.1}
	Let $G$ be a connected graph of order $n\ge n_1(k)$. If $2k\le\delta(G)<c_k\sqrt{n}$
	and $NC(G)\ge \frac{n-2}2$, then $G$ contains a $[2,k]$-ST. 
\end{prop}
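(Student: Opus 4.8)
The plan is to take a vertex $u$ of minimum degree $\delta(G)$, set $N(u)=\{u_i:i\in[\delta(G)]\}$ and $W=V(G)\setminus N[u]$, and split the argument according to the number of components of $G[W]$. Since $\delta(G)<c_k\sqrt n$ and $n\ge n_1(k)>16k^3$, we have $\delta(G)$ small compared to $n$, so Lemma~\ref{le2.3} applies with $S=\emptyset$ and $G[W]$ has at most two components. I would handle the two cases $G[W]$ connected and $G[W]$ having exactly two components separately.

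\textbf{Case 1: $G[W]$ connected.} The goal is to build a $1$-semi-$[2,k]$-T $T$ with $V(T)=N[u]\cup S$ for some small $S\subseteq W$, with a vertex $v\in S$ of degree exactly $k$, and then invoke Lemma~\ref{le2.8}. First I would make $u$ have degree $\ge k+1$ in the tree: since $\delta(G)\ge 2k$, $u$ already has $2k\ge k+1$ neighbors, and I hang all of $N(u)$ off $u$. The issue is that the $u_i$'s currently are leaves of degree $1$, which is fine, but I need to reach $W$ and make exactly one vertex have degree $k$ while keeping everyone else out of the forbidden range $[2,k]$. Using $E(N(u),W)\ne\emptyset$, pick $u_i$ with $N_W(u_i)\ne\emptyset$; by (\ref{e3}) every $w\in W$ has $d_W(w)\ge \frac{n-2}2-\delta(G)$, which is large, so I can greedily grow a small tree inside $W$ attached to $u_i$: pick $x\in N_W(u_i)$, then give $x$ exactly $k-1$ further neighbors in $W$ (possible since $d_W(x)$ is huge), making $d_T(x)=k$ — this $x$ plays the role of $v$. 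All these new vertices are leaves, $u_i$ has degree $2$... which is forbidden. So instead I must route through $u_i$ more carefully: give $u_i$ either degree $1$ (just the edge to $u$) — impossible if it must also connect to $W$ — or make $u_i$ itself the degree-$k$ vertex, or push enough neighbors onto it. The clean fix: pick $v=u_i$ to be the degree-$k$ vertex when some $u_i$ has $|N_W(u_i)|\ge k-1$; Lemma~\ref{le2.2} tells us the set of $u_i$ with $|N_W(u_i)|\le k-1$ is a clique, so it has at most $\delta(G)+1$ vertices but more importantly, if all $u_i$ had $|N_W(u_i)|\le k-1$ this clique would be all of $N(u)$, contradicting... actually I should argue: if some $u_i$ has $|N_W(u_i)|\ge k$, route $k-1$ of its $W$-neighbors plus the edge to $u$, but then $d_T(u_i)=k$ and all else leaves — but $v$ must lie in $S\subseteq W$ per Lemma~\ref{le2.8}. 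Re-reading Lemma~\ref{le2.8}: $v\in S\subseteq W$ and $d_T(v)=k$. So I do need the degree-$k$ vertex inside $W$. The construction: attach all of $N(u)$ to $u$; pick $u_i$ with $N_W(u_i)\ne\emptyset$ and $x\in N_W(u_i)$; then $u_i$ has degree $2$ in the tree-so-far — to kill this, add $\ge k-2$ more $W$-neighbors to $u_i$ (giving $d_T(u_i)\ge k$), but now $u_i$ has degree $\ge k$, possibly $=k$ — also forbidden-for-$W$-vertices-only; $u_i\in N(u)$ so $d_T(u_i)\in[2,k]$ is genuinely forbidden in a final $[2,k]$-ST, hence in any semi-tree it must be avoided or $u_i$ must later gain degree. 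This is exactly why Lemma~\ref{le2.8} demands $d_T(u')\ge k+1$ for the neighbor $u'$ of $u$ leading into $S$ (see Figure~\ref{f1}). So: give $u_i=u'$ degree $\ge k+1$ by attaching $\ge k$ of its $W$-neighbors (needs $|N_W(u_i)|\ge k$; if not, since the low-degree ones form a clique by Lemma~\ref{le2.2}, and $|N(u)|=\delta(G)\ge 2k$, if the clique is all of $N(u)$ then... use a different counting or the connectivity of $G[W]$ to find a suitable path); then from one of those $W$-neighbors $x$, build down to create a vertex $v$ of degree exactly $k$ by giving $v$ exactly $k-1$ children in $W$. Set $S=$ (all these $W$-vertices added). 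Then $|S|$ is bounded by roughly $k+k+\dots$, certainly $|S|<\frac{n-4k+10}4-\delta(G)$ since $\delta(G)<c_k\sqrt n$ and $n$ is large. Apply Lemma~\ref{le2.8}.

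\textbf{Case 2: $G[W]$ has exactly two components $C_1,C_2$.} By Lemma~\ref{le2.4}(i), $|C_i|\ge \frac n2-\delta(G)>3k$ for both $i$. Each $u_i\in N(u)$ sends edges into $C_1\cup C_2$ (possibly) plus $N[u]$; since $G$ is connected, every $u_i$ has a neighbor in $N[u]\cup C_1\cup C_2$, and to connect both components to $u$ I need: for each $j\in\{1,2\}$ some $u_{i}$ with $N_{C_j}(u_i)\ne\emptyset$. Attach all of $N(u)$ to $u$ (so $d_T(u)=\delta(G)\ge 2k\ge k+1$, good). For each component $C_j$, pick $v_j\in N(u)$ with $d_{C_j}(v_j)\ge 1$ and apply Lemma~\ref{le2.9}: if $d_{C_j}(v_j)\in\{1,|C_j|\}$ we get a $[2,k]$-ST of $C_j+v_j$ with $d(v_j)=d_{C_j}(v_j)$, and if $2\le d_{C_j}(v_j)<|C_j|$ we get a $1$-quasi-$[2,k]$-ST $T_{v_j}$ of $C_j+v_j$ with $d_{T_{v_j}}(v_j)=\min\{k,d_{C_j}(v_j)\}$. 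The delicate point: after gluing these onto $u$ via the edges $uv_j$, the vertex $v_j$ gains one more to its degree (the edge $uv_j$), and it may be a different $v_j$ for each component or the same vertex may serve both — I must choose the $v_j$ carefully and also bump the degrees of the $v_j$'s and of the "quasi" vertices out of $[2,k]$. A $1$-quasi-$[2,k]$-ST has one bad vertex $w$; I need to absorb it. The strategy is to pick $v_j$ so that in the final tree $d(v_j)\ge k+1$: if $v_j$ has many neighbors in $C_j$, route $\ge k$ of them through $v_j$; if not, I may need to also use neighbors of $v_j$ inside $N(u)$ (clique structure) or reroute. And the bad vertex of a quasi-tree must get extra edges — this is where the large minimum degree inside $C_j$ (from (\ref{e3})) is spent. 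I expect Lemmas~\ref{le2.50},~\ref{le2.8},~\ref{le2.9} are precisely engineered so that after setting up $d_T(u)\ge k+1$, choosing the right $v_j$'s in each component, and distributing a bounded number of extra vertices, everything glues.

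\textbf{Main obstacle.} The hard part is Case 2, specifically the bookkeeping of vertex degrees at the interface $N(u)$: ensuring that the $u_i$'s which connect to the components end up with degree $\ge k+1$ (not stuck in $[2,k]$), that the single "quasi" bad vertex produced by Lemma~\ref{le2.9}(ii) can always be repaired by pulling in a few more vertices of its component using (\ref{e3}), and that the total number of auxiliary vertices removed from each $C_j$ stays $\le 2k-2$ so that Lemma~\ref{le2.4}(ii) / Lemma~\ref{le2.9} keep applying. A secondary subtlety is the corner case where $G[W]$ is connected but every $u_i$ has $|N_W(u_i)|\le k-1$ (so the clique of Lemma~\ref{le2.2} is all of $N(u)$); there one must exploit that $N(u)$ is then a large clique together with $u$, giving $K_{\delta(G)+1}$ as a subgraph with many edges into $W$ collectively, and build the semi-tree using several $u_i$'s as internal degree-$\ge k+1$ vertices rather than one — or show this configuration forces enough structure that a $[2,k]$-ST is built directly. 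I would isolate that as a short sub-lemma before the main case split.
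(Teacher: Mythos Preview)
Your architecture is exactly the paper's: fix $u$ of minimum degree, split on whether $G[W]$ is connected, and feed the two cases into Lemma~\ref{le2.8} and Lemma~\ref{le2.9} respectively. Two places where your sketch drifts from what actually works:

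\textbf{Case 1, the clique subcase.} When every $|N_W(u_i)|\le k-1$, your plan of ``several $u_i$'s as internal degree-$\ge k+1$ vertices'' is workable but clumsy. The paper's move is simpler: since $N(u)$ is a clique (Lemma~\ref{le2.2}), it \emph{demotes $u$ to a leaf} and makes $u_1$ the center of a star over all of $N[u]\setminus\{u_1\}$ plus one vertex $x_1\in N_W(u_1)$, so $d_T(u_1)=\delta+1\ge 2k+1$; then $x_1$ gets $k-1$ further $W$-neighbours (via an induced path $u_1x_1x_2$ from Lemma~\ref{le2.7} and $d_W(x_1)>3k$), giving $d_T(x_1)=k$ and $|S|=k$, whence Lemma~\ref{le2.8} applies. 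In the complementary subcase $|N_W(u_1)|\ge k$ your description is essentially what the paper does.

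\textbf{Case 2, the quasi vertex.} In Lemma~\ref{le2.9}(ii) the unique bad vertex of $T_{v_j}$ is $v_j\in N(u)$ itself, not a vertex of $C_j$; and when $d_{C_j}(v_j)\le k$ the quasi-tree already consumes \emph{all} edges from $v_j$ into $C_j$, so your proposed repair ``by pulling in a few more vertices of its component using~(\ref{e3})'' is impossible. The fix is the other option you mention: boost $v_j$ via its neighbours in $N(u)$. The paper organizes this by first splitting on whether some $u_j$ has edges into both $C_1$ and $C_2$. If so, the two quasi-trees at that $u_j$ together with the edge $uu_j$ give $d(u_j)\ge k+1$ whenever $d_{C_1\cup C_2}(u_j)\ge k$, and otherwise $d_{N(u)}(u_j)\ge k$ so one reroutes $k-2$ vertices of $N(u)$ through $u_j$ instead of $u$. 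If no such $u_j$ exists, distinct $u_1,u_2$ serve the two components, with the analogous $N(u)$-rerouting when $d_{C_i}(u_i)$ is small (and Lemma~\ref{le2.2} again when both are small). No vertices are removed from any $C_j$ beyond what Lemma~\ref{le2.9} does internally, so the $|S'|\le 2k-2$ bookkeeping you flag as the main obstacle never arises in the gluing step.
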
 

From now on, let $G$ be a graph satisfying the conditions of Proposition~\ref{prop3.1}. Denote $\delta:= \delta(G)$. Let $u\in V(G)$ with $d(u)=\delta$, $N(u)=\{u_i:i\in[\delta]\}$ and $W=V(G)\setminus N[u]$. Since $n\ge n_1(k)$, 
$$|W|=n-1-\delta>n-1-c_k\sqrt{n}\ge n-1-\frac{n-12k+14}{4}=\frac{3n+12k-18}{4}>3k.
$$
implying that $W\neq\emptyset$.
Since $G$ is connected, we have  $E(N(u),W)\neq\emptyset$.

The rest of the proof
is divided into two cases,
which will be 
presented in two subsections.

\subsection{$G[W]$ is connected}
Let $U_i=N_W(u_i)$ for each $i\in [\delta]$. Assume $|U_1|=\max\limits_{i\in[\delta]}|U_i|$. Then $|U_1|\ge1$.

\setcounter{countcase}{0}

\incase $|U_1|=|W|$.

In this case, $G$ has a spanning tree 
$T$ with edge set $E(u,N(u))\cup E(u_1,W)$.
Clearly, 
$u$ and $u_1$ are the only vertices in $T$ 
of degrees larger than $1$, and $d_T(u)=\delta\ge 2k$ 
and $d_T(u_1)=|W|+1>3k$. 
Thus,  $T$ is a $[2,k]$-ST of $G$.

\incase $1\le|U_1|<|W|$.

Since $|U_1|<|W|$, by Lemma~\ref{le2.7}, there exist $x_1,x_2\in W$ such that $u_1x_1\in E(G)$, $x_1x_2\in E(G)$ and $u_1x_2\notin E(G)$. By $(\ref{e3})$, 
\equ{e21}
{d_W(x_1)\ge\frac{n-2}{2}-\delta
	>\frac{n-2}{2}-c_k\sqrt{n}\ge\frac{n-2}{2}-\frac{n-12k+14}{4}=\frac{n+12k-18}{4}>3k.
}		

We consider the following two subcases.

\vspace{2mm}
\noindent{\bf Subcase 2.1}: $1\le|U_1|\le k-1$.
\vspace{2mm}

By (\ref{e21}), we have $d_W(x_1)>3k$, and so there are at least $k-2$ vertices in $N_W(x_1)\setminus\{x_2\}$. Let $S_1\subseteq N_W(x_1)\setminus\{x_2\}$ with $|S_1|=k-2$. On the other hand, by the assumption, we have $|U_i|\le k-1$ for each $i\in[\delta]$. Then by Lemma~\ref{le2.2}, $N(u)$ is a clique in $G$. Thus $G[N[u]\cup S_1\cup\{x_1,x_2\}]$ has a $1$-semi-$[2,k]$-ST $T$ with edge set 
$$
E\left(u_1,(N[u]\setminus\{u_1\}\right)\cup\{x_1\})\cup E(x_1,S_1\cup\{x_2\}).
$$
Clearly, all vertices in 
$V(T)\setminus \{u_1,x_1\}$ 
are leaves  in $T$ 
and 
 $$
 d_T(u_1)=|N[u]|-1+1
 =\delta+1\ge2k+1
 $$
  and $$
  d_T(x_1)
  =|S_1|+2=k-2+2=k.
  $$
Observe that 
\eqn{}
{
\frac{n-4k+10}{4}-\delta
&>&\frac{n-4k+10}{4}-c_k\sqrt{n}\ge\frac{n-4k+10}{4}-\frac{n-12k+14}{4}\nonumber\\
&=&2k-1>k=|S_1\cup\{x_1,x_2\}|\ge2.\nonumber
}
Since $G[W]$ is connected, by Lemma~\ref{le2.8}, $G$ has a $[2,k]$-ST.

\vspace{2mm}

\noindent{\bf Subcase 2.2}: $k\le|U_1|<|W|$.

\vspace{2mm}

Let $S_2\subseteq U_1\setminus\{x_1\}$ with $|S_2|=k-1$. By $(\ref{e21})$, we have $d_W(x_1)>3k$, and so there are at least $k-2$ vertices in $N_{W}(x_1)\setminus(S_2\cup\{x_2\})$. Let $S_3\subseteq N_{W}(x_1)\setminus(S_2\cup\{x_2\})$ with $|S_3|=k-2$. Thus $G[N[u]\cup S_2\cup S_3\cup\{x_1,x_2\}]$ has a 1-semi-$[2,k]$-ST $T'$ with edge set
$$E(u,N(u))\cup E(u_1,S_2\cup\{x_1\})\cup E(x_1,S_3\cup\{x_2\}).
$$
Clearly, 
all vertices in $V(T')\setminus 
\{u,u_1,x_1\}$ are leaves 
in $T'$, and 
$$
d_{T'}(u)=|N(u)|
=\delta\ge2k, \quad 
d_{T'}(u_1)=|S_2|+2=k+1,
\quad   d_{T'}(x_1)=|S_3|+2=k.
$$
Observe that 
\eqn{}
{
\frac{n-4k+10}{4}-\delta
&>&\frac{n-4k+10}{4}-c_k\sqrt{n}\ge\frac{n-4k+10}{4}-\frac{n-12k+14}{4}\nonumber\\
&=&2k-1=|S_2\cup S_3\cup\{x_1,x_2\}|\ge2.\nonumber
}
Since $G[W]$ is connected, by Lemma~\ref{le2.8}, $G$ has a $[2,k]$-ST.

Hence Proposition~\ref{prop3.1} holds when $G[W]$ is connected.

\subsection{$G[W]$ is disconnected}

By Lemma~\ref{le2.3}, $G[W]$ contains exactly two components $C_1$ and $C_2$. For $i=1,2$, 
let $N^i(u):=\{u_j:1\le j\le \delta, E(u_j, C_i)\ne\emptyset\}$. Since $G$ is connected, $N^i(u)\neq\emptyset$ for both $i=1,2$. In the following, we prove Proposition~\ref{prop3.1} for the two subcases: $N^1(u)\cap N^2(u)\ne \emptyset$
and $N^1(u)\cap N^2(u)=\emptyset$, respectively. 		 

\subsubsection{$N^1(u)\cap N^2(u)\neq\emptyset$}

In this subsection, we assume that $u_1\in N^1(u)\cap N^2(u)$ with $d(u_1)\ge d(u_j)$ for each $u_j\in N^1(u)\cap N^2(u)$. Clearly,  $d_{C_1}(u_1)\ge1$ and $d_{C_2}(u_1)\ge1$. By Lemma~\ref{le2.9}, for each $i\in[2]$, if $d_{C_i}(u_1)\in\{1,|C_i|\}$, then $C_i+u_1$ has a $[2,k]$-ST $T_i$ with $d_{T_i}(u_1)=d_{C_i}(u_1)$; and if $2\le d_{C_i}(u_1)<|C_i|$, then $C_i+u_1$ has a $1$-quasi-$[2,k]$-ST $T^i_{u_1}$ with $d_{T^i_{u_1}}(u_1)=\min\{k,d_{C_i}(u_1)\}$.

\setcounter{countcase}{0}

\incase $d_{C_1\cup C_2}(u_1)\ge k$.

In this case, $G$ has a $[2,k]$-ST $T$ with edge set $E(T_1')\cup E(T_2')\cup E(u,N(u))$, where
$T_i'=T_i$ if $d_{C_i}(u_1)\in \{1,|C_i|\}$,
and $T_i'=T^i_{u_1}$ 
otherwise, for $i=1,2$. Clearly, $d_T(u)=|N(u)|=\delta\ge2k$ and $d_T(u_1)=d_{T_1'}(u_1)+d_{T_2'}(u_1)+1\ge k+1$. 

\incase $d_{C_1\cup C_2}(u_1)\le k-1$. 

In this case, we have
$$
d_{N(u)}(u_1)\ge\delta-d_{C_1\cup C_2}(u_1)-1\ge 2k-(k-1)-1=k.
$$ 
Let $S'\subseteq N_{N(u)}(u_1)$ with $|S'|=k-2$. Note that $d_{C_1\cup C_2}(u_1)\ge2$. Thus, $G$ has a $[2,k]$-ST $T'$ with edge set $E(u,N(u)\setminus S')\cup E(u_1,S')\cup E(T_1')\cup E(T_2')$, where
$T_i'=T_i$ if $d_{C_i}(u_1)\in \{1,|C_i|\}$,
and $T_i'=T^i_{u_1}$ 
otherwise, for $i=1,2$. 
Clearly, 
$$
d_{T'}(u)=|N(u)\setminus S'|\ge2k-(k-2)=k+2,
\quad  d_{T'}(u_1)=d_{T_1'}(u_1)+d_{T_2'}(u_1)+|S'|+1\ge k+1.
$$

Hence Proposition~\ref{prop3.1} holds when $N^1(u)\cap N^2(u)\ne \emptyset$.

\subsubsection{$N^1(u)\cap N^2(u)=\emptyset$}

Recall that $N^1(u)\neq\emptyset$ and $N^2(u)\neq\emptyset$. Assume that 
for $i=1,2$, $u_i\in N^i(u)$ with $d_{C_i}(u_i)\ge 
d_{C_i}(u_j)$ for each $u_j\in N^i(u)$. Without loss of generality, assume that $d_{C_1}(u_1)\ge d_{C_2}(u_2)$. 

By Lemma~\ref{le2.9}, for each $i\in[2]$, if $d_{C_i}(u_i)\in\{1,|C_i|\}$, then $C_i+u_i$ has a $[2,k]$-ST $T_i$ with $d_{T_i}(u_i)=d_{C_i}(u_i)$; and if $2\le d_{C_i}(u_i)<|C_i|$, then $C_i+u_i$ has a $1$-quasi-$[2,k]$-ST $T^i_{u_i}$ with $d_{T^i_{u_i}}(u_i)=\min\{k,d_{C_i}(u_i)\}$.

\setcounter{countcase}{0}

\incase $d_{C_2}(u_2)\ge k$.

In this case, 
$d_{C_1}(u_1)\ge d_{C_2}(u_2)
\ge k$.
 Then $G$ has a $[2,k]$-ST $T$ with edge set $E(T_1')\cup E(T_2')\cup N(u,N(u))$, where
 $T_i'=T_i$ if $d_{C_i}(u_i)\in \{1,|C_i|\}$,
 and $T_i'=T^i_{u_i}$ 
 otherwise, for $i=1,2$.
 
 \incase  $d_{C_1}(u_1)\le k-1$.
 
 In this case, 
 by the choice of vertices $u_1$ and $u_2$, $d_{C_1\cup C_2}(u_i)\le k-1$ for each $i\in[\delta]\setminus\{1,2\}$, and thus $N(u)$ is a clique by Lemma~\ref{le2.2}.
 It  follows that $G$ has a $[2,k]$-ST $T'$ with edge set 
$$
\bigcup_{j=2}^k\{u_1u_j\}\cup\bigcup_{i=k+1}^{\delta}\{u_2u_i\}\cup E(T_1')\cup E(T_2')\cup\{uu_1\},
$$
 where
 $T_i'=T_i$ if $d_{C_i}(u_i)\in \{1,|C_i|\}$,
 and $T_i'=T^i_{u_i}$ 
 otherwise, for $i=1,2$. Clearly, $d_{T'}(u_1)\ge k+1$ and $d_{T'}(u_2)\ge k+1$. 
 
 \incase 
 $d_{C_1}(u_1)\ge k$ and $d_{C_2}(u_2)\le k-1$. 

Since $d(u_2)\ge\delta\ge2k$ and $d_{C_2}(u_2)\le k-1$, we have 
$$
d_{N(u)}(u_2)\ge \delta-d_{C_2}(u_2)-1\ge2k-(k-1)-1=k.
$$ 
So there are at least $k-1$ vertices in $N_{N(u)}(u_2)$. Let $S'\subseteq N_{N(u)}(u_2)$ with $|S'|=k-1$. Then $G$ has a $[2,k]$-ST $T''$ with edge set 
$E(T_1')\cup E(T_2')\cup E(u,N(u)\setminus S')\cup E(u_2,S')$, where
$T_i'=T_i$ if $d_{C_i}(u_i)\in \{1,|C_i|\}$,
and $T_i'=T^i_{u_i}$ 
otherwise, for $i=1,2$. Clearly, $d_{T''}(u_2)=|S'|+1+d_{T_2'}(u_2)\ge k+1$, and $d_{T''}(u)=|N(u)\setminus S'|\ge2k-(k-1)=k+1$.

Hence Proposition~\ref{prop3.1} holds when $N^1(u)\cap N^2(u)=\emptyset$. 

This completes the proof of Theorem~\ref{Thm1.6}.

\section*{Declaration of Competing Interest}
The authors declare that they have no known competing financial interests or personal relationships that could have appeared to influence the work reported in this paper.

\vskip 0.3cm
	

\begin{thebibliography}{99}
\bibitem{ABHT90} M.O. Albertson, D.M. Berman, J.P. Hutchinson, C. Thomassen, Graphs with homeomorphically irreducible spanning trees, {\em J. Graph Theory}
{\bf 14} (1990), 247 -- 258.



\bibitem{ChRS12} G.T. Chen, H. Ren, S.L. Shan, Homeomorphically irreducible spanning trees in locally connected graphs, {\em Combin. Probab. Comput.}
{\bf 21} (1-2)(2012),  107 -- 111.

\bibitem{Chsh13} G.T. Chen, S.L. Shan, Homeomorphically irreducible spanning trees, {\em J. Combin. Theory Ser. B}
{\bf 103} (2013), 409 -- 414.

\bibitem{FuST24} M. Furuya, A. Saito and S. Tsuchiya, Refinements of degree conditions for the existence of a spanning tree without small degree stems,
{\it Discrete Math.} 
{\bf 348} (2)(2025),  114307.

\bibitem{FuTs13} M. Furuya, S. Tsuchiya,
Forbidden subgraphs and the existence of a spanning tree without small degree stems,
{\it Discrete Math.}
{\bf 313} (20)(2013), 2206 -- 2212.

\bibitem{FuTs20} M. Furuya, S. Tsuchiya, Large homeomorphically irreducible trees in path‐free graphs, {\em J. Graph Theory} {\bf 93} (3)(2020), 372 -- 394.

\bibitem{FuTsarxiv} M. Furuya, S. Tsuchiya, 
A new strategy for finding spanning trees without small degree stems, {\em Electron. J. Comb.} {\bf 32} (3)(2025).

\bibitem{ItTs22} T. Ito, S. Tsuchiya, Degree sum conditions for the existence of homeomorphically irreducible spanning trees, 
{\em J. Graph Theory}
{\bf 99} (1)(2022), 162 -- 170.

\bibitem{LiDo24} Y.B. Li, F.M. Dong, X.L. Hu, H.Q. Liu, A neighborhood union condition for the existence of a
spanning tree without degree $2$ vertices, preprint, arXiv:2412.07128.

\end{thebibliography}
\end{document}